\begin{document}

\title{Local Transitivity and Entanglement Obstructions for Primitive Points}

\author[C. Nguyen, A. Yagci, Y. Zhou]{Chi Nguyen, Arman Yagci, Yunchuan Zhou}

\begin{abstract}
Primitive points on the tower of modular curves \(X_1(n)\) provide a finite ``certificate set'' for detecting isolated points above a fixed \(j\)-invariant: for a non-CM elliptic curve \(E/\QQ\), \(j(E)\) arises from an isolated point on some \(X_1(N)\) if and only if one of the associated primitive points is isolated. We bound the number \(|\PE|\) of primitive points in terms of the adelic index \(I(E)\) and give criteria as well as an algorithm for uniqueness of primitive point. As an application, every Serre curve has \(|\PE|=1\); hence Serre curves do not contribute isolated \(j\)-invariants.
\end{abstract}

\maketitle

\section{Introduction}
For an integer $N \geq 1$, non-cuspidal points on the modular curve \(X_1(N)\) over \(\mathbb{Q}\) parametrize isomorphism classes of elliptic curves together with a distinguished point of exact order \(N\). A central theme in the arithmetic of modular curves is to understand atypical algebraic points: points which do not lie in the expected infinite families of points of the same degree. The modern framework is organized around \textbf{parameterized points} and \textbf{isolated points}, as discussed in detail in \cite{viray2025isolatedparameterizedpointscurves}. Concretely, a closed point $x \in C$ of degree $d$ on a curve $C/k$ is isolated if it is neither explained by a degree-$d$ map $C \to \PP^1$ ($\PP^1$-parameterization) nor by a positive-rank subvariety in $\text{Jac}(C)$ (AV-parameterization) (\cite{viray2025isolatedparameterizedpointscurves}, \cite{BOURDON2019106824},\cite{Bourdon_2024}). On modular curves, isolated and sporadic points interact tightly with uniformity problems for Galois representations attached to elliptic curves: exceptional behavior of mod-$\ell$ Galois representations can force low-degree points on $X_1(\ell^r)$, which constrain the possible Galois images.

Serre's Open Image Theorem (\cite{serre1972proprietes}) implies that for a fixed non-CM elliptic curve $E/\QQ$, the mod $p$ representations $\rho_{E,p}$ are surjective for all sufficiently large primes $p$. Serre's Uniformity Conjecture (\cite{serre1972proprietes}), now a conjecture of Sutherland (\cite{SUTHERLAND_2016}) and Zywina (\cite{zywina2015possibleimagesmodell}), asks for a single bound that works simultaneously for all non-CM $E/\QQ$, often conjectured to be $37$ (\cite{zywina2024explicitopenimageselliptic}, \cite{PMIHES_1981__54__123_0}). The bridge to modular curves is that any failure of surjectivity at a prime $p$ forces the image into a proper subgroup $H\subseteq \GL_2(\FF_p)$, and hence produces a $\QQ$-rational point on the corresponding modular curve $X_H \to X(1)$. In particular, Borel-type images correspond to rational $p$-isogenies while the remaining large $p$ possibilities are controlled by Cartan-normalizer type curves (e.g. $X^+_{\text{ns}}(p)$) with $p>37$. Thus, proving uniformity becomes a problem on a (finite, effectively describable) collection of modular curves: show that for $p$ beyond a universal threshold, every $\QQ$-rational point on each relevant $X_H$ is cuspidal or CM, so no non-CM elliptic curve can realize a non-surjective $\rho_{E,p}$.

As shown in \cite{bourdon2021sporadicpointsodddegree}, if there are only finitely many isolated $j$-invariants associated to non-CM $\QQ$-curves, which are elliptic curves isogenous to their Galois conjugates, then Serre's Uniformity Conjecture holds. This motivates the study of isolated points as an effort towards answering Serre's question.

Recently, \cite{Bourdon_2024} introduced, for each non-CM elliptic curve $E/\QQ$, a finite set of points on the modular tower $\sqcup_{n\geq 1} X_1(n)$, called the primitive points associated to $E$, and denoted $\PE$. Essentially, this is the minimal set from which we can recover the degree of every closed point $x$ above $j(E)$ on some $X_1(n)$. Additionally, every closed point on some $X_1(n)$ lying above $j(E)$ maps to a unique primitive point, and $j(E)$ arises from an isolated point on some $X_1(n)$ if and only if at least one point of $\PE$ is isolated. Thus, $\PE$ is a compact ``certificate'' set capturing all levels at which $E$ can contribute isolated behavior. For example, one might ask the question: does the probability of $j(E)$ being isolated increase with larger $\lvert \PE \rvert$? On the other hand, we know that if $\lvert \PE \rvert=1$, $j(E)$ is necessarily non-isolated. Hence, this paper studies the size and structure of \(\PE\) from two complementary perspectives: uniform bound in terms of adelic invariants, and criteria for the extremal case $\lvert \PE \rvert=1$.

\subsection{Bounds from Adelic Invariants}
Let \(\rho_E:\Gal(\overline{\mathbb{Q}}/\mathbb{Q})\to \GL_2(\widehat{\mathbb{Z}})\) be the adelic representation and let \(I(E) \coloneqq[\GL_2(\widehat{\mathbb{Z}}):\rho_E(\Gal(\overline{\mathbb{Q}}/\mathbb{Q}))]\) be its index. Let \(m_0(E)\) denote the level of the $m$-adic Galois representation, where $m$ is the product of $2,3$, and primes $\ell$ where the $\ell$-adic Galois representation is not surjective. The main result of section 3 establishes the following bound for \(|\PE|\) in terms of $m_0(E)$ and $I(E)$, which may better be viewed as two separate bounds.

\begin{theorem}
Let $E/\mathbb Q$ be a non-CM elliptic curve, and let $m_0=m_0(E)$ be as above. Then
\[
|\mathcal{P}(E)|\le \min\left\{ m_0^2,\ 1+\frac{I(E)\,\sigma_0(m_0)}{2}\right\}~,
\]
where $\sigma_0(m_0)$ denotes the number of positive divisors of $m_0$.
\end{theorem}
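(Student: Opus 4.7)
My plan is to reformulate $\mathcal P(E)$ in terms of Galois orbits on torsion, localize to levels dividing $m_0$, and then carry out two independent counts.

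For each $n \ge 1$, write $H_n := \rho_E(G_{\mathbb Q}) \bmod n$ and $\bar H_n := \langle H_n, -I\rangle$. Under the moduli interpretation, closed points of $X_1(n)$ above $j(E)$ correspond to $\bar H_n$-orbits on the vectors of exact order $n$ in $(\mathbb Z/n)^2$. The first main step is a localization: every primitive point has level $d \mid m_0$. Given $n$, factor $n = n_0 n_1$ with $n_0$ supported on the prime divisors of $m_0$ and $\gcd(n_1, m_0) = 1$; by surjectivity of the $\ell$-adic representations for $\ell \nmid m_0$ and a Goursat-type argument, $H_n \cong H_{n_0} \times \GL_2(\mathbb Z/n_1)$. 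The $\bar H_n$-orbit of an order-$n$ vector then splits as a product in which the $n_1$-component is the unique full orbit on primitive vectors of $(\mathbb Z/n_1)^2$, so the degree at level $n$ strictly exceeds the degree of its image at level $n_0$ whenever $n_1 > 1$. By the minimality defining primitivity, only levels dividing $m_0$ can contribute.

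With the localization in hand, the first bound is immediate: CRT and the reductions $(\mathbb Z/m_0)^2 \twoheadrightarrow (\mathbb Z/d)^2$ give a $\bar H_{m_0}$-equivariant bijection between $(\mathbb Z/m_0)^2$ and $\bigsqcup_{d \mid m_0}\{\text{order-}d\text{ vectors in } (\mathbb Z/d)^2\}$, so
\[
|\mathcal P(E)| \;\le\; \#\{\bar H_{m_0}\text{-orbits on } (\mathbb Z/m_0)^2\} \;\le\; m_0^2.
\]
For the second bound, write $|\mathcal P(E)| \le 1 + \sum_{d \mid m_0,\, d > 1} N(d)$, where $N(d)$ is the number of closed points of $X_1(d)$ above $j(E)$ and the $+1$ accounts for the unique level-$1$ primitive point $j(E)$. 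Transitivity of $\GL_2(\mathbb Z/d)$ on order-$d$ vectors with stabilizer $U_d$ of size $d\phi(d)$, combined with a double-coset bound and the $\pm I$-identification intrinsic to $X_1(d)$, yields $N(d) \le I(E)/2$; summing over the $\sigma_0(m_0)-1$ nontrivial divisors of $m_0$ gives $|\mathcal P(E)| \le 1 + I(E)\sigma_0(m_0)/2$.

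The crux is the localization step, which requires reconciling the tower-theoretic definition of primitivity (strict degree drops along divisor-refinement maps) with the Goursat decomposition of $\rho_E$ across the primes of $m_0$ and its complement. Care is needed with the small primes $2,3$ (which by convention sit inside $m_0$) and with the interaction between $-I$ and the product decomposition; in particular the factor of $2$ in the second bound depends on how $-I$ sits inside $H_d$ and should be tracked carefully. Once that localization is established, both bounds reduce to essentially routine orbit inventories.
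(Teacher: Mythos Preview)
Your localization sketch and the $m_0^2$ bound are essentially what the paper does (the paper simply cites the localization from \cite{Bourdon_2024}). Note, though, that your Goursat argument only shows the level of a primitive point is supported on the primes of $m_0$; you still need the defining property of $m_0$ as the $m$-adic level to bound the exponents and conclude $d\mid m_0$.

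The genuine gap is in the second bound. Your inequality $N(d)\le I(E)/2$ is supposed to come from the $\pm I$ identification, but passing from $H_d$ to $\bar H_d=\langle H_d,-I\rangle$ gains a factor of $2$ in the index only when $-I\notin H_d$. That hypothesis can fail: for $d=2$ it fails trivially since $-I=I$ in $\GL_2(\mathbb F_2)$, and for larger $d$ it fails whenever $-I$ already lies in $\rho_{E,d}(\Gal_{\mathbb Q})$. In that case $[\GL_2(\mathbb Z/d):\bar H_d]=[\GL_2(\mathbb Z/d):H_d]$, which at $d=m_0$ (when $m_0$ equals the adelic level $N$) can be as large as $I(E)$, not $I(E)/2$. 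You flag this issue yourself but do not resolve it, and no double-coset manipulation recovers the missing factor of $2$ in that situation.

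The paper obtains the factor of $2$ by a mechanism that has nothing to do with $-I$. For each proper divisor $n<m_0$ one has $n<m_0\le N$, so the kernel $K(n)=\ker\bigl(\GL_2(\widehat{\mathbb Z})\to\GL_2(\mathbb Z/n)\bigr)$ is not contained in $G$ (otherwise $n$ would already be a level for $G$). Hence $G\subsetneq GK(n)$, giving $[GK(n):G]\ge 2$ and therefore
\[
r_n\le[\GL_2(\mathbb Z/n):\bar H_n]=[\GL_2(\widehat{\mathbb Z}):\widetilde G\,K(n)]\le[\GL_2(\widehat{\mathbb Z}):G\,K(n)]\le I(E)/2.
\]
At the top level only $r_{m_0}\le I(E)$ is used, and the bookkeeping
\[
r_1+r_{m_0}+\sum_{\substack{n\mid m_0\\1<n<m_0}}r_n\ \le\ 1+I(E)+\frac{I(E)}{2}\bigl(\sigma_0(m_0)-2\bigr)=1+\frac{I(E)\,\sigma_0(m_0)}{2}
\]
still lands on the stated bound. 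So the saving of $2$ comes from the level structure of the adelic image, not from the $\pm I$ quotient; your argument needs to be rerouted through this kernel/level observation.
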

\subsection{Uniqueness via Transitivity and Entanglement Obstruction}
Write $\rho_{E,n}$ for the mod-$n$ representation, and set
\[
H(n)=\langle \rho_{E,n}(\Gal(\overline{\QQ}/\QQ)),-I\rangle\subseteq \GL_2(\ZZ/n\ZZ).
\]
Via the standard identification $E[n]\cong (\ZZ/n\ZZ)^2$ after choosing a basis, let $V_n$ denote the subset of vectors of exact order $n$.

\begin{theorem}[Uniqueness criterion]\label{thm:intro-uniq}
Let $E/\QQ$ be non-CM and let $m_0=m_0(E)$.  Then $|\PE|=1$ if and only if, for every divisor $n\mid m_0$, the group $H(n)$ acts transitively on $V_n$.
In this case, the unique primitive point is the degree-$1$ point above $j(E)$ on $X_1(1)$.
\end{theorem}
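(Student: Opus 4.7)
The plan is to recast primitivity as an orbit-theoretic condition and then run a \emph{minimal bad level} argument on divisors of $m_0$. I would first invoke the following dictionary, which I expect to be available from \cite{Bourdon_2024} and the preceding sections of this paper: (i) closed points of $X_1(n)$ above $j(E)$ are in bijection with $H(n)$-orbits $O \subseteq V_n$, the inclusion of $-I$ in $H(n)$ accounting for the $P \leftrightarrow -P$ identification on $X_1(n)$; (ii) primitive points can only occur at levels $n \mid m_0$, so $\PE \subseteq \bigsqcup_{n \mid m_0} X_1(n)$; and (iii) the degree-$1$ point above $j(E)$ on $X_1(1)$, corresponding to the unique orbit on $V_1 = \{0\}$, is always primitive. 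The key reformulation I would use throughout is that the closed point corresponding to $O$ at level $n$ is primitive if and only if for every $d \mid n$ with $d < n$, $O$ is a \emph{proper} subset of $\pi_d^{-1}(\pi_d(O)) \cap V_n$, where $\pi_d\colon V_n \to V_d$ is the map $v \mapsto (n/d)v$; equivalently, more than one $H(n)$-orbit lies above the image $H(d)$-orbit $\pi_d(O) \subseteq V_d$.

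Given this dictionary, I would prove both implications by analyzing the smallest divisor of $m_0$ at which transitivity could fail. For the ``if'' direction, assume $H(n)$ acts transitively on $V_n$ for every $n \mid m_0$. Then at each such $n > 1$ the unique $H(n)$-orbit is $V_n$ itself, and the identity $V_n = \pi_d^{-1}(V_d) \cap V_n$ holds trivially for every proper divisor $d$ of $n$ (since $\pi_d(V_n) \subseteq V_d$ by definition of $V_n$). Hence this orbit fails the primitivity criterion, so no primitive point occurs at any level $n \mid m_0$ with $n > 1$; combined with (ii), this forces $|\PE| = 1$, with the unique element being the degree-$1$ point on $X_1(1)$. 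For the converse, suppose $H(n_0)$ fails to be transitive on $V_{n_0}$ for some $n_0 \mid m_0$, and choose $n_0$ minimal with this property. For every $d \mid n_0$ with $d < n_0$, minimality forces $H(d)$ to act transitively on $V_d$, and the surjectivity of $\pi_d$ onto $V_d$ then yields $\pi_d(O) = V_d$ for every $H(n_0)$-orbit $O$ on $V_{n_0}$. Since $H(n_0)$ has at least two orbits, any chosen $O$ is a proper subset of $V_{n_0} = \pi_d^{-1}(V_d) \cap V_{n_0}$; as this holds for every proper divisor $d$ of $n_0$, the orbit $O$ yields a primitive point at level $n_0 > 1$, distinct from the level-$1$ point, so $|\PE| \geq 2$.

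The main obstacle I anticipate is justifying ingredient (ii), the reduction of $\PE$ to levels dividing $m_0$, which depends on the formal setup of primitivity in the tower; I would plan to appeal to the preceding section of the paper or to \cite{Bourdon_2024} for this rather than reprove it. A minor technical point worth checking is the surjectivity $\pi_d(V_n) = V_d$, used to conclude $\pi_d(O) = V_d$ when $H(d)$ is transitive; this reduces by CRT to prime-power ratios $n/d$ and follows from lifting a unit coordinate of any $w \in V_d$ to a unit mod $n$.
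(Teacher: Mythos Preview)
Your argument is correct but proceeds differently from the paper. The paper routes everything through degrees: Lemma~\ref{some equivalent statements} combines the formula $\deg(f_n)=|V_n|/2$ (for $n>2$) with the orbit--degree dictionary to show that $H(n)$ is transitive on $V_n$ iff some closed point above $j(E)$ on $X_1(n)$ has $\deg(x)=\deg(f_n)$, and Theorem~\ref{uniqueness criterion} then invokes the ``associated primitive point'' characterization from \cite{Bourdon_2024} (each closed point maps to a unique primitive point at the minimal level where the degree condition becomes an equality) to conclude that $|\PE|=1$ exactly when every closed point has its associated primitive point on $X_1(1)$. You instead recast primitivity purely in orbit language and run a minimal-bad-level argument: at the least $n_0\mid m_0$ where transitivity fails, every proper divisor level is already transitive, so each $H(n_0)$-orbit sits properly inside $\pi_d^{-1}(V_d)\cap V_{n_0}=V_{n_0}$ for every proper $d\mid n_0$ and is therefore itself primitive. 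Your route is more self-contained---it bypasses the explicit degree formula for $f_n$ and the associated-primitive-point black box---though it does not immediately yield the paper's further equivalence with transitivity at the single top level $m_0$ (conditions (3)--(4) of Theorem~\ref{uniqueness criterion}). One small point to supply: your ``key reformulation'' of primitivity is not stated verbatim in the preceding sections; it is the equivalence $\deg(x)=\deg(f)\cdot\deg(f(x))\iff O=\pi_d^{-1}(\pi_d(O))\cap V_n$, which follows in one line from the orbit--degree dictionary once one notes $|\pi_d^{-1}(\pi_d(O))\cap V_n|=|\pi_d(O)|\cdot|V_n|/|V_d|$.
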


Using this, we can reduce global transitivity on all such $n$ to two verifiable hypotheses: a \textbf{local transitivity} condition at each prime power dividing $m_0$, and a \textbf{stabilizer-surjectivity} condition that controls entanglement across coprime factors.  The latter is motivated by the fiber-product description of division-field Galois groups and the fact that entanglement fields obstruct naive product decompositions.

For coprime $a,b$ with $ab\mid m_0(E)$, let $K_a=\QQ(E[a])$, $K_b=\QQ(E[b])$, and $L_{a,b}=K_a\cap K_b$. 

\begin{theorem}[Sufficient criterion for uniqueness]\label{thm:intro-sufficient}
Let $E/\QQ$ be non-CM and let $m_0=m_0(E)$.  Suppose:
\begin{enumerate}
\item (Local transitivity) for every prime power $\ell^k\parallel m_0$, the group $H(\ell^k)$ acts transitively on $V_{\ell^k}$; and
\item (Stabilizer-surjectivity) for every coprime $a,b$ with $ab \mid m_0,$ let $L_{a,b} \coloneq K_a \cap K_b$ and $Q_{a,b} \coloneq \Gal(L_{a,b}/\QQ)$. Under restriction, the natural surjections $$\Gal(K_a/\QQ) \twoheadrightarrow Q_{a,b} \quad \Gal(K_b/\QQ) \twoheadrightarrow Q_{a,b}$$ are surjective stabilizers of exact-order points.
\end{enumerate}
Then $H(n)$ acts transitively on $V_n$ for every $n\mid m_0$, hence $|\PE|=1$.
\end{theorem}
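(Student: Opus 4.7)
The strategy is to verify the hypothesis of Theorem~\ref{thm:intro-uniq}, namely that $H(n)$ acts transitively on $V_n$ for every $n\mid m_0$, and then invoke that theorem directly. I will prove transitivity by induction on the number $r$ of distinct prime factors of $n$; the base case $r=1$ is exactly hypothesis~(1), so the real content is the inductive step, which will be handled by a fiber-product / stabilizer-lifting argument using hypothesis~(2).

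\textbf{Setup via fiber products.} For coprime $a,b$ with $ab\mid m_0$, the CRT isomorphism $E[ab]\cong E[a]\oplus E[b]$ identifies $V_{ab}$ with $V_a\times V_b$, and since $\QQ(E[ab])=K_aK_b$ with $K_a\cap K_b=L_{a,b}$, standard Galois theory yields
\[
H(ab)\;=\;H(a)\times_{Q_{a,b}} H(b)\;\subseteq\;H(a)\times H(b),
\]
the two restriction maps to $Q_{a,b}=\Gal(L_{a,b}/\QQ)$ being the quotients by $\Gal(K_a/L_{a,b})$ and $\Gal(K_b/L_{a,b})$. Because $-I$ restricts trivially to $L_{a,b}$, enlarging by $\langle -I\rangle$ preserves this description, so the fiber-product formula is valid for $H(\cdot)$ as defined in the theorem.

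\textbf{Inductive step.} Assume $H(a)$ acts transitively on $V_a$ and $H(b)$ acts transitively on $V_b$. Given $(v_a,v_b),(w_a,w_b)\in V_a\times V_b$, pick $g\in H(a)$ with $g\cdot v_a=w_a$ and $h\in H(b)$ with $h\cdot v_b=w_b$; these need not share the same image in $Q_{a,b}$, which is the only obstacle to $(g,h)\in H(ab)$. Writing $\bar g,\bar h\in Q_{a,b}$ for the images, hypothesis~(2) applied to the exact-order point $w_a$ produces $s\in \mathrm{Stab}_{H(a)}(w_a)$ with $\bar s=\bar h\,\bar g^{-1}$. Setting $g':=sg$ gives $g'\cdot v_a=w_a$ and $\overline{g'}=\bar h$, so $(g',h)$ lies in the fiber product $H(ab)$ and sends $(v_a,v_b)$ to $(w_a,w_b)$. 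Hence $H(ab)$ is transitive on $V_{ab}$. Iterating: write $n=\ell_1^{k_1}\cdots\ell_r^{k_r}$, split $n=ab$ with $a=\ell_1^{k_1}$ and $b=n/a$, apply~(1) for the $V_a$ direction and the inductive hypothesis for $V_b$; since $ab\mid m_0$, hypothesis~(2) is available and the argument above applies. Finally, Theorem~\ref{thm:intro-uniq} converts transitivity on every $V_n$ into $|\PE|=1$.

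\textbf{Expected difficulty.} The routine pieces are CRT and the fiber-product identification. The main subtlety is the bookkeeping around hypothesis~(2): the inductive step consumes stabilizer-surjectivity at the specific target point $w_a$ (an arbitrary element of $V_a$), so one must read~(2) as asserting that for every exact-order $v\in V_a$ the image of $\mathrm{Stab}_{H(a)}(v)$ under $H(a)\to Q_{a,b}$ is all of $Q_{a,b}$, and symmetrically for $V_b$. A secondary check is that augmenting by $-I$ does not enlarge the image in $Q_{a,b}$ nor shrink the stabilizer of an exact-order point in a way that breaks the surjection; both are immediate since $-I$ fixes $\pm v$, acts trivially on $L_{a,b}$, and already lies in $H(a)$ and $H(b)$ by definition.
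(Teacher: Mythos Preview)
Your strategy—induction on the number of prime factors, with a fiber-product/stabilizer-lifting argument at the inductive step—is exactly the paper's approach. The gap is in your treatment of $-I$. The identity $H(ab)=H(a)\times_{Q_{a,b}}H(b)$ that you assert is not correct in general: the genuine fiber-product identity is
\[
G(ab)\;=\;G(a)\times_{Q_{a,b}}G(b),
\]
which follows from $G(n)\cong\Gal(K_n/\QQ)$ and the Galois theory of the compositum $K_{ab}=K_aK_b$. Adjoining $-I$ on the $ab$-side contributes only the single diagonal element $(-I_a,-I_b)$, so $H(ab)=\langle G(ab),(-I_a,-I_b)\rangle$ can be a \emph{proper} subgroup of $H(a)\times_{Q_{a,b}}H(b)$ whenever $-I\notin G(a)$ or $-I\notin G(b)$: the ``mixed'' pairs $(-g_0,h_0)$ with $g_0\in G(a)$, $h_0\in G(b)$ lie in the right-hand fiber product but typically not in $H(ab)$. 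Your justification ``$-I$ acts trivially on $L_{a,b}$'' is precisely the unjustified step—$-I$ is a matrix, not an element of $\Gal(K_a/\QQ)$ unless it already lies in $G(a)$, and even then the corresponding Galois automorphism need not fix $L_{a,b}$. (Nor does $-I$ fix $v$; it sends $v$ to $-v$.)

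The paper sidesteps this by running the entire induction with $G(n)$ rather than $H(n)$: hypothesis~(2) is already phrased in terms of $\Gal(K_a/\QQ)\cong G(a)$, and in the body the local-transitivity hypothesis is likewise stated for $G(\ell^k)$ (the condition $(\mathrm{LT}_G)$), not $H(\ell^k)$. With both hypotheses on $G$, the fiber-product identity is the correct one, your stabilizer-lifting step goes through verbatim to give $G(n)$ transitive on $V_n$, and the conclusion for $H(n)\supseteq G(n)$ is immediate. If you insist on only $H(\ell^k)$-transitivity, the inductive step genuinely breaks: when $V_a$ and $V_b$ each split into two $G$-orbits exchanged by $-I$, the four $G(ab)$-orbits on $V_a\times V_b$ pair up under $(-I,-I)$ into \emph{two} $H(ab)$-orbits, so $H(ab)$ is not transitive on $V_{ab}$.
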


We package these conditions into an algorithm that effectively verifies uniqueness of primitive point given the generators of the adelic Galois representation image and $m_0$. These results fit into (and are motivated by) the broader isolated-point program on modular curves.  The point of primitive points is that they isolate the genuinely “new” levels where an elliptic curve can contribute isolated behavior, and Theorems~\ref{thm:intro-uniq}--\ref{thm:intro-sufficient} show that the obstruction to uniqueness decomposes into two explicit sources: local orbit failures and entanglement phenomena.
\subsection{Serre curves and a density-one corollary}
Motivated by \cite{Jones2009-ob}'s result, where almost all elliptic curves, when organized by naive height, are Serre curves, which are non-CM $E/\QQ$ with $I(E)=2$, we pose the question: what is the characteristic of primitive points associated with Serre curves? We answer this question with the following theorem.

\begin{theorem}
If $E/\QQ$ is a Serre curve, then $|\PE|=1$.  In particular, $j(E)$ does not arise from an isolated point on any $X_1(n)$.
\end{theorem}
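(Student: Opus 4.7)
The plan is to apply the uniqueness criterion of Theorem~\ref{thm:intro-uniq}: it suffices to verify that $H(n)$ acts transitively on $V_n$ for every divisor $n$ of $m_0(E)$. I will in fact establish transitivity for every $n \geq 1$, which is more than enough.

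Recall the classical description of the adelic image of a Serre curve: $\rho_E(\Gal(\overline{\QQ}/\QQ)) \subseteq \GL_2(\widehat{\ZZ})$ coincides with the kernel of the quadratic character
\[
\epsilon(g) \;=\; \mathrm{sgn}(g \bmod 2)\cdot \chi_d(\det g),
\]
where $d$ is the squarefree part of $\Delta_E$, $\chi_d$ is the Kronecker character attached to $\QQ(\sqrt d)$, and $\mathrm{sgn}\colon \GL_2(\ZZ/2\ZZ) \cong S_3 \to \{\pm 1\}$ is the sign. This character encodes the unique entanglement $\QQ(\sqrt d) \subseteq \QQ(E[2]) \cap \QQ(\zeta_{|D|})$, with $D$ the discriminant of $\QQ(\sqrt d)/\QQ$, and $\epsilon$ factors through $\GL_2(\ZZ/N\ZZ)$ precisely when $\mathrm{lcm}(2,|D|) \mid N$.

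Fix $n \geq 1$. If $\mathrm{lcm}(2,|D|) \nmid n$, then $\epsilon$ does not descend to $\GL_2(\ZZ/n\ZZ)$ and $H(n) = \GL_2(\ZZ/n\ZZ)$, which acts transitively on $V_n$ by a standard fact. Otherwise $2 \mid n$ and $H(n) = \ker \epsilon_n$ has index $2$, so by orbit–stabilizer the transitivity of $H(n)$ on $V_n$ is equivalent to $\mathrm{Stab}_{\GL_2(\ZZ/n\ZZ)}(v) \not\subseteq H(n)$ for some $v \in V_n$ (and the condition is independent of $v$, since stabilizers are conjugate and $H(n)$ is normal as the kernel of $\epsilon_n$). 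Take $v_0 = (1,0)$, whose stabilizer is the upper Borel, and test $g_0 = \begin{pmatrix} 1 & 1 \\ 0 & 1 \end{pmatrix}$: since $\det g_0 = 1$ we have $\chi_d(\det g_0) = 1$, while $g_0 \bmod 2$ is a transposition in $S_3$ so $\mathrm{sgn}(g_0 \bmod 2) = -1$; hence $\epsilon_n(g_0) = -1$ and the stabilizer is not contained in $H(n)$, yielding transitivity.

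Having verified the hypothesis of Theorem~\ref{thm:intro-uniq} for every $n$, we conclude $|\PE| = 1$. The unique primitive point is the degree-$1$ point on $X_1(1) \cong \PP^1_\QQ$ lying above $j(E)$, which is trivially $\PP^1$-parameterized (any rational point of $\PP^1$ is parameterized by the identity map), hence not isolated; therefore $j(E)$ does not arise from an isolated point on any $X_1(n)$. The only substantive input is the classical character-theoretic description of the Serre-curve adelic image; granted that, the entire argument reduces to the one-matrix check above, so I do not anticipate a serious obstacle.
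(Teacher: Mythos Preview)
Your proof is correct and takes a genuinely different route from the paper's. You invoke the explicit character-theoretic description of the Serre-curve adelic image as the kernel of $\epsilon=\mathrm{sgn}\cdot(\chi_d\circ\det)$ and reduce transitivity of $H(n)$ on $V_n$ to a single stabilizer check: the unipotent $g_0=\left(\begin{smallmatrix}1&1\\0&1\end{smallmatrix}\right)$ fixes $e_1=(1,0)$, has determinant $1$, and reduces to a transposition mod~$2$, so $\epsilon(g_0)=-1$ and the stabilizer escapes the index-$2$ kernel. The paper instead uses only that $[\GL_2(\Zhat):G]=2$ forces $G\supseteq C\coloneq[\GL_2(\Zhat),\GL_2(\Zhat)]$, and then proves directly---via CRT reduction to prime powers, an explicit induction at $p=2$, and the inclusion $\SL_2\subseteq[\GL_2,\GL_2]$ for odd $p$---that $C(n)=[\GL_2(\ZZ/n\ZZ),\GL_2(\ZZ/n\ZZ)]$ already acts transitively on $V_n$. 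Your argument is shorter and more conceptual once the Serre-curve character is taken as input; the paper's argument is self-contained (needing nothing about the image beyond its index) and yields the independently useful fact that the commutator of $\GL_2(\ZZ/n\ZZ)$ is transitive on primitive vectors for every $n$. One small point worth making explicit: your identification $H(n)=\ker\epsilon_n$ tacitly uses $-I\in G$, which holds since $-I\equiv I\pmod 2$ and $\det(-I)=1$, hence $\epsilon(-I)=1$ and $H(n)=G(n)$.
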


Combined with Jones’s theorem that Serre curves have natural density $1$ when elliptic curves over $\QQ$ are ordered by height \cite{Jones2009-ob}, we note that
when ordered by height, almost all non-CM elliptic curves $E/\QQ$ satisfy $|\PE|=1$.  Equivalently, the presence of multiple primitive points is an exceptional phenomenon.

In particular, multiple primitive points, and hence the possibility of isolated behavior in the modular tower, can be traced to explicit failures of local transitivity and/or explicit entanglement constraints.

\subsection{Outline} The paper is organized as follows. Section~2 recalls the necessary background. Section~3 proves the general bounds for \(|\PE|\). Section~4 develops the transitivity characterizations of uniqueness and proves the sufficient criterion based on local transitivity and stabilizer-surjectivity. We also give an algorithm that checks for this criterion given an elliptic curve and analyze the results. Section~5 treats Serre curves and derives the density-one corollary.

\subsection{Code} The algorithms mentioned in this paper have been implemented in SageMath \cite{sagemath}. Associated codes and data are available in the GitHub repository at \url{https://github.com/ckn2000/local-transitivity-and-entanglement}.

\subsection*{Acknowledgments} We are grateful to Abbey Bourdon for introducing the questions that led to this work, and to Sarah Arpin for her encouragement and insightful guidance throughout. This collaboration originated during the Arithmetic Geometry at UNT workshop in May 2025, and we thank Lea Beneish for organizing the event and creating the setting in which this project could begin. We also thank David Zywina for making his algorithm publicly available and David Roe for his modifications of said algorithm.

\section{Background}
\subsection{Closed Points on the Modular Curve $X_1(N)$}
Set $\GalQ:=\Gal(\overline{\QQ}/\QQ)$. The modular curve $X=X_1(N)$ can be viewed as a scheme over $\mathbb Q$. We say that a (scheme theoretic) point $x\in X$ is \textbf{closed} if $\{x\}$ is Zariski closed. The residue field $\mathbb Q(x)$ is a finite extension of $\mathbb Q$ if and only if $x$ is closed, in which case the degree of this extension is the \textbf{degree of the closed point} $x$. Closed points of $X$ are in bijection with $\GalQ$-orbits in $X(\overline{\mathbb Q})$, and the degree of a closed point is the size of the corresponding Galois orbit.

Let $E/\mathbb Q$ be a non-CM elliptic curve, and let $P=(x_0,y_0)\in E$ be a point of order $N$. Under the moduli interpretation, the pair $(E,P)$ corresponds to a non-cuspidal point $x\in X_1(N)$. If $x$ is closed, we also write $[E,P]$ to denote the closed point, and we have $\mathbb Q(x) \cong \mathbb Q(x_0)$ so that the degree of $x=[E,P]$ can be more easily computed as $[\mathbb Q(x_0):\mathbb Q]$.

\subsection{Natural Map Between Modular Curves}
For positive integers $a$ and $b$, there is a natural surjective $\mathbb Q$-rational map $f:X_1(ab)\rightarrow X_1(a)$ which sends $[E,P]\mapsto[E,bP]$ and satisfies $\deg(x)\le \deg(f)\cdot \deg(f(x))$ for closed points $x\in X_1(ab)$. The degree of $f$ is given by
\[\deg(f)=c_f \cdot b^2\prod_{p\mid b, p\nmid a}\left(1-\frac{1}{p^2}\right)~,\]
where $c_f=\frac{1}{2}$ if $a\leq 2$ and $ab>2$, and $c_f=1$ otherwise.

For natural maps $f:X_1(n)\rightarrow X_1(b)$ and $g:X_1(b)\rightarrow X_1(a)$, the composition $g\circ f$ coincides with the natural map $h:X_1(n)\rightarrow X_1(a)$ and we have $\deg(h)=\deg(f)\cdot \deg(g)$.

\begin{subsection}{Isolated Points and Primitive Points}
Specializing on modular curves $X_1(N)$, we give a definition of isolated points.
\begin{definition} Fix some modular curve $X=X_1(N)$ and let $x\in X$ be a closed point of degree $d$. Let $\phi_d:\text{Sym}^d(X)\rightarrow \text{Pic}^d(X)\cong \text{Pic}^0(X)$ be the degree $d$ Abel-Jacobi morphism.
\begin{enumerate}
\item We say $x$ is \textbf{$\mathbb P^1$-parameterized} if there exists a $\mathbb Q$-rational $x'\in \text{Sym}^d(X)$ such that $x\neq x'$ and $\phi_d(x)=\phi_d(x')$.
\item We say $x$ is \textbf{AV-parameterized} if there exists a positive rank abelian subvariety $A\subseteq \text{Pic}^0(X)$ such that $\phi_d(x)+A\subseteq \text{im}(\phi_d)$.
\item We say $x$ is \textbf{isolated} if it is neither $\mathbb P^1$-parameterized nor AV-parameterized.
\end{enumerate}
\end{definition}

For a more in-depth treatment of isolated points defined on a larger class of curves, see \cite{viray2025isolatedparameterizedpointscurves} or \cite{BOURDON2019106824}.
Next, we define the closely related primitive points.

\begin{definition}
Let $E/\mathbb Q$ be a non-CM elliptic curve. The set of \textbf{primitive points} $\mathcal P(E)$ associated to $E$ is defined as the set of all closed points $x\in \bigcup_{n\in \mathbb Z^+} X_1(n)$ with $j(x)=j(E)$ such that $\deg(x)<\deg(f)\cdot \deg(f(x))$ for all natural maps $f:X_1(n)\rightarrow X_1(n')$ taking $x\in X_1(n)$ to $f(x)\in X_1(n')$ where $n'\mid n$ is some proper divisor.
\end{definition}

We note that $\mathcal P(E)$ is always non-empty as the unique closed point in $X_1(1)$ above $j(E)$ is vacuously primitive due to the absence of a natural map to a lower level. 

The motivation for this definition is captured by Theorem 24 in \cite{Bourdon_2024}, which characterizes primitive points as a finite ``certificate set'' for detecting isolated points. By the proof of said theorem, each closed point $x\in X_1(n)$ with $j(x)=j(E)$ corresponds to a unique primitive point, namely the image of $x$ under the natural map $f:X_1(n)\rightarrow X_1(a)$ where $a\mid n$ is minimal such that $\deg(x) = \deg(f)\cdot \deg(f(x))$. Further, Remark 26 and Corollary 32 in \cite{Bourdon_2024} together show that a natural map $g:X_1(n)\rightarrow X_1(b)$ also satisfies this degree condition if and only if $a\mid b\mid n$. In this case, the primitive point corresponding to $g(x)\in X_1(b)$ is also $f(x)$.

\begin{ex}
Let $E/\mathbb Q$ be a non-CM elliptic curve and suppose $E$ has a $\mathbb Q$-rational point $P=(x_0,y_0)$ of order $n>1$. Since $x_0\in \mathbb Q$, the degree of the corresponding closed point $x=[E,P]$ is $[\mathbb Q(x_0):\mathbb Q]=1$.

Write $n=ab$ for positive integers $a,b$ with $b>1$ and note the natural map $f:X_1(ab)\rightarrow X_1(a)$ satisfies
$$\deg(f)=c_{f}\cdot b^2 \prod_{p\mid b,~p\nmid a} \left(1-\frac{1}{p^2}\right) \geq c_{f}\cdot b^2  \prod_{p\mid b} \left(1-\frac{1}{p^2}\right)\geq \frac{1}{2} \cdot \prod_{p\mid b} (p^2-1) > 1~.$$
Hence, for all proper divisors $a$ of $n$ we have \[\deg(x)=1< \deg(f)\cdot \deg(f(x))~,\]
showing that $x$ is primitive.
\end{ex}
\end{subsection}

\begin{subsection}{Galois Representations}
Let $E/\mathbb{Q}$ be a non-CM elliptic curve and write $\Gal_{\mathbb{Q}}\coloneq\Gal(\overline{\mathbb{Q}}/\mathbb{Q})$.
The Galois action on the full torsion subgroup $E(\overline{\mathbb{Q}})_{\mathrm{tors}}$ is encoded by the \textbf{adelic Galois representation}
\[
\rho_E:\Gal_{\mathbb{Q}}\rightarrow \text{Aut}(E(\overline{\mathbb Q})_{\text{tors}}) \cong \GL_2(\widehat{\mathbb{Z}}) \cong \prod_{p \text{ prime}} \GL_2(\mathbb Z_p)~.
\]
For each integer $n\ge 1$, projecting to primes dividing $n$ yields the \textbf{$n$-adic Galois representation}
\[\rho_{E,n^\infty}: \Gal_{\mathbb Q} \rightarrow \prod_{p \mid n} \GL_2(\mathbb Z_p)~,\]
which describes the action of $\GalQ$ on points whose order is divisible only by primes dividing $n$, whereas reduction modulo $n$ yields the \textbf{mod $n$ Galois representation}, which records the Galois action on points of order dividing $n$,
\[
\rho_{E,n}:\Gal_{\mathbb{Q}}\rightarrow \GL_2(\mathbb{Z}/n\mathbb{Z})~.
\]
Set 
\(
G \coloneq \rho_E(\Gal_{\mathbb{Q}})\subseteq \GL_2(\widehat{\mathbb{Z}})~,
\quad
I(E)\coloneq[\GL_2(\widehat{\mathbb{Z}}):G]~, \quad G(n)\coloneq\rho_{E,n}(\Gal_{\mathbb{Q}})~.\)
Equivalently, $G(n)$ is the image of $G$ under the natural reduction map
$\pi_n:\GL_2(\widehat{\mathbb{Z}})\twoheadrightarrow \GL_2(\mathbb{Z}/n\mathbb{Z})$. \\

Serre's Open Image Theorem says that $G$ is open in $\GL_2(\Zhat)$ and $I(E)$ is finite (\cite{serre1972proprietes}). In particular, there exists a positive integer $N$ such that $G=\pi_N^{-1}(G(N))$. The smallest such $N$ is called the \textbf{level of $G$}. Similarly, the smallest $m_0\in \mathbb Z^+$ such that $\text{im}(\rho_{E,m^\infty})=\pi_{m_0}^{-1}(G(m_0))$ is the \textbf{level of} $\text{im}(\rho_{E,m^\infty})$.
Let
\[
S_E\coloneq\{2,3\} \cup \{\ell \text{ prime}:\rho_{E,\ell^\infty}\ \text{is not surjective}\}~,
\qquad
m\coloneq\prod_{\ell\in S_E}\ell~,
\]
and let $m_0$ denote the level of the $m$-adic image. Then, by Proposition 28 in \cite{Bourdon_2024}, every primitive point attached to $E$ occurs on $X_1(n)$ for some divisor $n\mid m_0$.
\end{subsection}

\section{Bound on the Number of Primitive Points}
Let $E/\QQ$ be a non-CM elliptic curve and let $E[n]$ denote the $n$-torsion points on $E$. Fix $n\ge 1$. The noncuspidal geometric points of $X_1(n)$ lying above $j(E)$ correspond to isomorphism classes of pairs $(E,\{\pm P\})$ with $P\in E[n]$ of exact order $n$.  Choosing a basis identifies the free $(\mathbb{Z}/n\mathbb{Z})$-module $E[n]\cong (\mathbb{Z}/n\mathbb{Z})^2$, and under this identification, points $P\in E[n]$ of exact order $n$ correspond precisely to vectors in $V_n:$
\[
V_n\coloneq\{v=(a,b) \in (\ZZ/n\ZZ)^2: \ord(v)=n\}~,
\]
where $v=(a,b)=cP+bQ$ for $(P,Q)$ fixed chosen basis of $E[n]$ and $\ord(v)$ denotes the additive order of $v$ in the abelian group $(\ZZ/n\ZZ)^2$.

\begin{lemma}
If $v=(a,b)\in (\ZZ/n\ZZ)^2$, then
\[
\ord(v)=\frac{n}{\gcd(a,b,n)}~.
\]
In particular, $v\in V_n$ if and only if $\gcd(a,b,n)=1$.
\end{lemma}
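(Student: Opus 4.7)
The plan is to reduce the two-dimensional order computation to the familiar one-dimensional fact that the order of $a\in\ZZ/n\ZZ$ equals $n/\gcd(a,n)$, and then apply the standard $\mathrm{lcm}$--$\gcd$ duality for divisors of $n$.

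First I would unwind the definition: $\ord(v)$ is the least positive integer $k$ with $kv\equiv 0\pmod n$ coordinatewise, i.e.\ $n\mid ka$ and $n\mid kb$. Since $\ord(a)=n/\gcd(a,n)$ and $\ord(b)=n/\gcd(b,n)$ in $\ZZ/n\ZZ$, the condition is that $k$ be a common multiple of these two orders, so
\[
\ord(v)=\mathrm{lcm}\!\left(\frac{n}{\gcd(a,n)},\ \frac{n}{\gcd(b,n)}\right).
\]

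Next I would invoke the elementary identity: for any divisors $x,y$ of a positive integer $n$, one has $\mathrm{lcm}(n/x,n/y)=n/\gcd(x,y)$. I would prove this in one line by writing $\mathrm{lcm}(n/x,n/y)=\frac{(n/x)(n/y)}{\gcd(n/x,n/y)}$ and noting that $\gcd(n/x,n/y)=n/\mathrm{lcm}(x,y)$. Applying this with $x=\gcd(a,n)$ and $y=\gcd(b,n)$ gives
\[
\ord(v)=\frac{n}{\gcd(\gcd(a,n),\gcd(b,n))}=\frac{n}{\gcd(a,b,n)},
\]
where the last equality is associativity of $\gcd$.

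The ``in particular'' clause is then immediate: $v\in V_n$ iff $\ord(v)=n$ iff $n/\gcd(a,b,n)=n$ iff $\gcd(a,b,n)=1$. There is no real obstacle here; the only step worth writing out carefully is the $\mathrm{lcm}$--$\gcd$ duality for divisors of $n$, which I would state and verify in a single sentence before applying it.
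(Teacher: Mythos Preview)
Your proof is correct. Both the paper and you start from the same characterization --- $\ord(v)$ is the least positive $d$ with $da\equiv db\equiv 0\pmod n$ --- but handle it differently: the paper packages this as the statement that $d$ generates the annihilator ideal $\mathrm{Ann}(v)\subseteq\ZZ/n\ZZ$ and then asserts $d=n/\gcd(a,b,n)$ without further computation, while you decompose coordinatewise as $\ord(v)=\mathrm{lcm}\bigl(n/\gcd(a,n),\,n/\gcd(b,n)\bigr)$ and invoke the $\mathrm{lcm}$--$\gcd$ duality on the divisor lattice of $n$. Your route is more explicit and makes the arithmetic visible; the paper's annihilator-ideal phrasing is terser. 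One small remark: your ``one-line'' justification of $\mathrm{lcm}(n/x,n/y)=n/\gcd(x,y)$ reduces it to the companion identity $\gcd(n/x,n/y)=n/\mathrm{lcm}(x,y)$, which is the same statement in dual form; both follow at once from the observation that $d\mapsto n/d$ is an order-reversing bijection on the divisors of $n$, so you may as well say that directly.
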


\begin{proof}
Note $\overline{d}\coloneq\overline{\ord(v)} \in \ZZ/n\ZZ$ generates the annihilator ideal
$\text{Ann}(v)=d\ZZ/n\ZZ \leq \ZZ/n\ZZ$.
The conditions $d\mid n$ and $da\equiv db\equiv 0 \pmod{n}$ together with the minimality of $d$
then force $d=\frac{n}{\gcd(a,b,n)}$.
The final assertion follows.
\end{proof}

\begin{lemma}\label{point-orbit bijection}
The closed points $x\in X_1(n)$ satisfying $j(x)=j(E)$ are in bijection with the $H(n)$-orbits on $V_n$, where \(H(n)\coloneq\langle G(n),-I\rangle \subseteq \GL_2(\ZZ/n\ZZ).
\)
\end{lemma}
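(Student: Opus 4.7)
The plan is to combine three standard identifications: (i) noncuspidal geometric points of $X_1(n)$ above $j(E)$ with pairs $(E,\pm P)$ where $P\in E[n]$ has exact order $n$; (ii) such pairs, after fixing a basis, with $V_n/\{\pm I\}$; and (iii) closed points with $\Gal_\QQ$-orbits of geometric points, where the Galois action on $V_n$ is given by $\rho_{E,n}$. Fusing $\{\pm I\}$ and $G(n)$ then produces the $H(n)$-action, and its orbits on $V_n$ will correspond bijectively to closed points.

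Concretely, I would first recall that a noncuspidal geometric point of $X_1(n)$ is an isomorphism class of pairs $(E',P')$ with $\ord(P')=n$, and having $j(E')=j(E)$ forces $E'\cong E$ over $\overline{\QQ}$. Thus every such point is represented by some $(E,P)$ with $P\in E[n]$ of exact order $n$, and two representatives $(E,P),(E,P')$ determine the same point iff $P'=\phi(P)$ for some $\phi\in\Aut(E)$. Because $E$ is non-CM, $\Aut(E)=\{\pm 1\}$, so the equivalence is exactly $P\sim -P$. Choosing the fixed basis $(P_0,Q_0)$ of $E[n]$ yields a bijection between noncuspidal geometric points above $j(E)$ and $V_n/\{\pm I\}$.

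Next I would bring in the Galois action. For $\sigma\in\Gal_\QQ$, since $E$ is defined over $\QQ$, the action on geometric points is $(E,P)\mapsto(E,\sigma(P))$, which on $V_n$ becomes $v\mapsto \rho_{E,n}(\sigma)v=G(n)\cdot v$ (passing to the image). So Galois orbits on $V_n/\{\pm I\}$ are precisely $G(n)$-orbits on $V_n/\{\pm I\}$. Unwinding the quotient, two vectors $v,v'\in V_n$ lie in the same Galois orbit of geometric points iff $v'\in G(n)\cdot\{\pm v\}=\langle G(n),-I\rangle\cdot v=H(n)\cdot v$; that is, Galois orbits of geometric points above $j(E)$ correspond bijectively to $H(n)$-orbits on $V_n$. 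Composing with the standard bijection between closed points of $X_1(n)$ and $\Gal_\QQ$-orbits on geometric points finishes the argument.

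The main point to handle with care is step (i), namely that every isomorphism of pairs $(E,P)\cong(E,P')$ of geometric points reduces to multiplication by $\pm 1$. This uses $\Aut(E_{\overline\QQ})=\{\pm 1\}$, which is exactly where the non-CM hypothesis enters; without it the quotient would be by a larger automorphism group and the statement would have to be adjusted. A small secondary check is that the $\{\pm I\}$ identification is compatible with the Galois action (which is clear since $-I$ commutes with everything), so that collapsing to $H(n)$-orbits on $V_n$ is well-defined and matches the composition of the two quotients taken in either order.
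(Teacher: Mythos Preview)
Your proposal is correct and follows essentially the same approach as the paper: identify geometric points above $j(E)$ with pairs $(E,P)$ modulo $\{\pm 1\}$ (using the non-CM hypothesis for $\mathrm{Aut}(E_{\overline{\QQ}})=\{\pm 1\}$), translate the Galois action to the $G(n)$-action on $V_n$, and combine to obtain $H(n)$-orbits. Your write-up is slightly more explicit (e.g., the remark that $j(E')=j(E)$ forces $E'\cong E$ over $\overline{\QQ}$ and the compatibility check between $-I$ and the Galois action), but there is no substantive difference in method.
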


\begin{proof}
Closed points of $X_1(n)$ are $\Gal_{\QQ}$-orbits of geometric points.
A geometric point of $X_1(n)$ above $j(E)$ is represented by a pair $(E,P)$
with $P\in E[n]$ of exact order $n$, modulo automorphisms of $(E,P)$.

For $\sigma\in \Gal_{\QQ}$ one has $\sigma\cdot(E,P)=(E,\sigma(P))$,
and relative to a chosen basis of $E[n]$ the action of $\sigma$ is that of
$\rho_{E,n}(\sigma)\in G(n)$.

Since $E$ is non-CM, we have
$\text{Aut}(E_{\overline{\QQ}})=\{\pm 1\}$, and the automorphism $-1$ sends $(E,P)$ to $(E,-P)$,
which corresponds to the action of $-I$ on $(\ZZ/n\ZZ)^2$.
Therefore closed points of $X_1(n)$ above $j(E)$ correspond to orbits of
$\langle G(n),-I\rangle=H(n)$ acting on $V_n$.
\end{proof}

\begin{theorem}
The natural action of $\GL_2(\ZZ/n\ZZ)$ on $(\ZZ/n\ZZ)^2$ is transitive on $V_n$.
\end{theorem}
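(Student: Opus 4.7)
The plan is to show that every vector in $V_n$ lies in the $\GL_2(\ZZ/n\ZZ)$-orbit of the standard basis vector $e_1=(1,0)$, noting $e_1 \in V_n$ since $\gcd(1,0,n)=1$. Given $v \in V_n$, the existence of some $g \in \GL_2(\ZZ/n\ZZ)$ with $g\cdot e_1 = v$ is equivalent to $v$ being completable to a basis of $(\ZZ/n\ZZ)^2$; that is, there exists $w \in (\ZZ/n\ZZ)^2$ such that the matrix with columns $v, w$ has determinant in $(\ZZ/n\ZZ)^{\times}$. So the transitivity question reduces to a completion-to-basis statement.

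To handle the completion, I would apply the Chinese Remainder Theorem to reduce to the prime-power case. Factor $n=\prod_i p_i^{e_i}$; then
\[
\GL_2(\ZZ/n\ZZ)\cong \prod_i \GL_2(\ZZ/p_i^{e_i}\ZZ),\qquad (\ZZ/n\ZZ)^2 \cong \prod_i (\ZZ/p_i^{e_i}\ZZ)^2,
\]
and by the previous lemma a vector lies in $V_n$ if and only if each of its components lies in the corresponding $V_{p_i^{e_i}}$. Hence it suffices to prove transitivity when $n=p^k$ is a prime power, and then assemble the local solutions by CRT.

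For $n=p^k$, the condition $\gcd(a,b,p^k)=1$ forces at least one of $a,b$ not to be divisible by $p$, hence to be a unit in $\ZZ/p^k\ZZ$. After possibly swapping coordinates via the permutation matrix in $\GL_2(\ZZ/p^k\ZZ)$, assume $a$ is a unit. Taking the complementary column $w=(0,1)$ gives a matrix $\begin{pmatrix} a & 0 \\ b & 1\end{pmatrix}$ of determinant $a \in (\ZZ/p^k\ZZ)^{\times}$, sending $e_1$ to $v$. This settles the prime-power case.

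The only place where something could go wrong is in ensuring that the locally constructed matrices, obtained on each $\GL_2(\ZZ/p_i^{e_i}\ZZ)$-factor, patch together through CRT into a single global element of $\GL_2(\ZZ/n\ZZ)$. This step is essentially automatic since $\GL_2$ commutes with the product decomposition and the unit condition on determinants is checked factor-by-factor, so I do not expect a real obstacle here; the main content is the local existence of a unit coordinate, which is immediate from the primitivity condition $\gcd(a,b,p)=1$.
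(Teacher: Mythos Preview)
Your proof is correct, but it takes a different route from the paper. The paper argues directly: given $v=(a,b)\in V_n$ with $\gcd(a,b,n)=1$, it invokes B\'ezout to find $u,w$ with $ua+wb\equiv 1\pmod n$, and then the single matrix $M=\begin{pmatrix} a & -w \\ b & u\end{pmatrix}$ has $\det M\equiv 1$ and $Me_1=v$. No CRT, no case analysis. Your argument instead localizes via CRT to prime powers and exploits that over $\ZZ/p^k\ZZ$ one coordinate of a primitive vector must be a unit, which makes the completion-to-basis step trivial. Both approaches are short and valid; the paper's is slightly more economical and, as a bonus, lands the matrix in $\SL_2(\ZZ/n\ZZ)$ rather than just $\GL_2$, a fact the paper later uses (Remark~\ref{transitivity of SL_2}). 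Your local matrices have determinant equal to a unit coordinate, so as written you only get a $\GL_2$-element; if you wanted the $\SL_2$ conclusion you would need to normalize, e.g.\ by taking the second column to be $(0,a^{-1})$ instead of $(0,1)$.
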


\begin{proof}
Let $v=(a,b)\in V_n$, so $\gcd(a,b,n)=1$ by the previous lemma.
Hence we may choose $u,w\in \ZZ/n\ZZ$ with $ua+wb\equiv 1\pmod n$ and set
\[
M\coloneq\begin{pmatrix} a & -w \\ b & u \end{pmatrix}.
\]
Then $\det(M)=au+bw\equiv 1\pmod n$, hence $M\in \GL_2(\ZZ/n\ZZ)$, and $Me_1=v$ for $e_1=(1,0)$.
This shows that every $v\in V_n$ lies in the orbit of $e_1$.
\end{proof}

\begin{remark} \label{transitivity of SL_2}
In the proof of the above theorem, we in fact have $M\in \SL_2(\ZZ/n\ZZ)$,
so $\SL_2(\ZZ/n\ZZ)$ acts transitively on $V_n$.
\end{remark}

\begin{corollary}\label{bound on orbits}
Let $r_n$ denote the number of $H(n)$-orbits on $V_n$. Then
\[
r_n\le [\GL_2(\ZZ/n\ZZ):H(n)]~.
\]
\end{corollary}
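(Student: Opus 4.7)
The plan is to deduce the bound from the transitivity theorem just proved by a standard double-coset / orbit counting argument. Write $G=\GL_2(\ZZ/n\ZZ)$ and fix a basepoint $v_0\in V_n$ (for instance $v_0=e_1$). By the theorem, $G$ acts transitively on $V_n$, so the orbit map $g\mapsto g\cdot v_0$ identifies $V_n$ with the coset space $G/\Stab_G(v_0)$ as a $G$-set, and hence as an $H(n)$-set under restriction.

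Under this identification, $H(n)$-orbits on $V_n$ are exactly the double cosets $H(n)\backslash G/\Stab_G(v_0)$. Since every double coset is a (nonempty) union of left $H(n)$-cosets in $G$, the number of double cosets is at most the total number of left $H(n)$-cosets, namely $[G:H(n)]$. This gives $r_n\le [G:H(n)]$, as required.

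As an alternative (and arguably more elementary) phrasing, one can run the bound directly by comparing orbit sizes: for any $v\in V_n$ we have $\Stab_{H(n)}(v)=H(n)\cap \Stab_G(v)\leq \Stab_G(v)$, so by orbit–stabilizer
\[
|H(n)\cdot v|=\frac{|H(n)|}{|\Stab_{H(n)}(v)|}\ \ge\ \frac{|H(n)|}{|\Stab_G(v)|}.
\]
Summing this lower bound over one representative from each of the $r_n$ orbits and using $|V_n|=|G|/|\Stab_G(v)|$ from transitivity of $G$ yields
\[
|V_n|\ \ge\ r_n\cdot \frac{|H(n)|}{|\Stab_G(v)|}\quad\Longrightarrow\quad r_n\ \le\ \frac{|G|}{|H(n)|}=[G:H(n)].
\]

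There is no real obstacle here: the only input is the transitivity of $G=\GL_2(\ZZ/n\ZZ)$ on $V_n$, which has just been established, together with the elementary fact that restricting a transitive group action to a subgroup produces orbits indexed by double cosets. I would present the double-coset version in a single short paragraph, since it makes clear that equality holds precisely when $\Stab_G(v_0)\subseteq H(n)$, a remark that will be useful later when analyzing when the action of $H(n)$ itself is transitive (the case $r_n=1$ relevant to Theorem~\ref{thm:intro-uniq}).
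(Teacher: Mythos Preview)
Your proof is correct and is simply a fleshed-out version of the paper's one-sentence argument, which just invokes the standard fact that a transitive $G$-set breaks into at most $[G:H]$ orbits under any subgroup $H$; both your double-coset and orbit--stabilizer formulations are valid ways to justify that fact. One small correction to your closing aside: equality $r_n=[\GL_2(\ZZ/n\ZZ):H(n)]$ requires $\mathrm{Stab}_G(v)\subseteq H(n)$ for \emph{every} $v\in V_n$ (equivalently, every $G$-conjugate of $\mathrm{Stab}_G(v_0)$ lies in $H(n)$), not merely $\mathrm{Stab}_G(v_0)\subseteq H(n)$, since $H(n)$ need not be normal in $\GL_2(\ZZ/n\ZZ)$.
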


\begin{proof}
Since $V_n$ is a single $\GL_2(\ZZ/n\ZZ)$-orbit and $H(n)$ is a subgroup of $\GL_2(\ZZ/n\ZZ)$,
the orbit decomposition under $H(n)$ has at most $[\GL_2(\ZZ/n\ZZ):H(n)]$ components.
\end{proof}

\begin{lemma}\label{bound with index}
Let $\pi_n:\GL_2(\Zhat)\twoheadrightarrow \GL_2(\ZZ/n\ZZ)$ be reduction modulo $n$
and set $K(n)\coloneq\ker(\pi_n)$.
Let $G\coloneq\rho_E(\Gal_{\QQ})\subseteq \GL_2(\Zhat)$ and $\widetilde G\coloneq\langle G,-I\rangle$.
Then
\[
[\GL_2(\ZZ/n\ZZ):H(n)]
=
[\GL_2(\Zhat):\widetilde G\,K(n)]
\le
[\GL_2(\Zhat):G]=I(E)~.
\]
\end{lemma}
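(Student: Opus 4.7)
The strategy is to translate each term into a statement about subgroups of $\GL_2(\Zhat)$ containing $K(n)$ via the reduction map $\pi_n$, then use the correspondence theorem and the obvious containment $G\subseteq \widetilde G\subseteq \widetilde G\,K(n)$.

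\textbf{Step 1: Identify $H(n)$ as a reduction.} First I would observe that $\pi_n(G)=G(n)$ by definition, and $\pi_n(-I)=-I$, so $\pi_n(\widetilde G)=\langle G(n),-I\rangle=H(n)$. Thus $H(n)$ is the image of $\widetilde G$ under $\pi_n$.

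\textbf{Step 2: Apply the correspondence theorem.} Since $\pi_n$ is surjective with kernel $K(n)$, the lattice isomorphism theorem gives a bijection between subgroups of $\GL_2(\ZZ/n\ZZ)$ and subgroups of $\GL_2(\Zhat)$ containing $K(n)$, preserving indices. Under this bijection $H(n)$ corresponds to $\pi_n^{-1}(H(n))$, and a standard argument shows $\pi_n^{-1}(H(n))=\widetilde G\,K(n)$: the inclusion $\widetilde G\,K(n)\subseteq \pi_n^{-1}(H(n))$ is immediate, and conversely any $x\in\pi_n^{-1}(H(n))$ satisfies $\pi_n(x)=\pi_n(g)$ for some $g\in\widetilde G$ by Step 1, so $xg^{-1}\in K(n)$, giving $x\in\widetilde G\,K(n)$. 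Therefore
\[
[\GL_2(\ZZ/n\ZZ):H(n)]=[\GL_2(\Zhat):\widetilde G\,K(n)].
\]

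\textbf{Step 3: The inequality.} Since $G\subseteq\widetilde G\subseteq \widetilde G\,K(n)$, passing to indices yields $[\GL_2(\Zhat):\widetilde G\,K(n)]\le[\GL_2(\Zhat):G]=I(E)$, which completes the chain.

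There is no real obstacle here; the only point requiring a line of justification is the identity $\pi_n^{-1}(H(n))=\widetilde G\,K(n)$ in Step 2, which is a routine application of surjectivity of $\pi_n$ together with $\pi_n(\widetilde G)=H(n)$.
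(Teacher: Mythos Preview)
Your proof is correct and follows essentially the same route as the paper: both identify $H(n)$ as $\pi_n(\widetilde G)$, use the correspondence between subgroups of $\GL_2(\ZZ/n\ZZ)$ and subgroups of $\GL_2(\Zhat)$ containing $K(n)$ to obtain the index equality (the paper phrases this as $\widetilde G\,K(n)/K(n)\cong H(n)$, you as $\pi_n^{-1}(H(n))=\widetilde G\,K(n)$), and then read off the inequality from $G\subseteq \widetilde G\,K(n)$.
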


\begin{proof}
The subgroup $K(n)$ is normal in $\GL_2(\Zhat)$ and $\widetilde G\,K(n)$ is a subgroup containing $\widetilde G$.
The restriction of $\pi_n$ to $\widetilde G\,K(n)$ induces a surjection
$\widetilde G\,K(n)\twoheadrightarrow H(n)$ with kernel $K(n)$, hence
$\widetilde G\,K(n)/K(n)\cong H(n)$.

Since $\GL_2(\Zhat)/K(n)\cong \GL_2(\ZZ/n\ZZ)$, the index identity follows.
The inequality is immediate from $G\subseteq \widetilde G\subseteq \widetilde G\,K(n)$.
\end{proof}

Combining Corollary~\ref{bound on orbits} and Lemma~\ref{bound with index} gives
\[
r_n\le I(E)\qquad\text{for all }n\ge 1.
\]

In fact, we will see that $r_n \le I(E)/2$ for $n<m_0$. For that we need the following lemma.

\begin{lemma}\label{preimage-criterion}
For each $n>0$, let $G(n)=\pi_n(G)$ and $K(n)=\ker(\pi_n)$ as above.
Then $\pi_n^{-1}(G(n)) = G$ if and only if $K(n)\subseteq G$.
\end{lemma}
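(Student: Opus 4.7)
The statement is a standard fact about preimages of subgroups under a surjective homomorphism with normal kernel, so the plan is to verify both implications directly using the identity $\pi_n^{-1}(\pi_n(G)) = G\cdot K(n)$.

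For the forward direction, assume $\pi_n^{-1}(G(n)) = G$. Since the identity lies in $G(n)$, the kernel $K(n) = \pi_n^{-1}(\{I\})$ is contained in $\pi_n^{-1}(G(n)) = G$, giving $K(n) \subseteq G$ immediately.

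For the backward direction, assume $K(n) \subseteq G$. The inclusion $G \subseteq \pi_n^{-1}(G(n))$ is automatic. Conversely, take any $g \in \pi_n^{-1}(G(n))$. Then $\pi_n(g) \in G(n) = \pi_n(G)$, so there exists $g' \in G$ with $\pi_n(g) = \pi_n(g')$. Hence $g(g')^{-1} \in K(n) \subseteq G$, which forces $g \in G\cdot G = G$. This yields $\pi_n^{-1}(G(n)) \subseteq G$, completing the equality.

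There is no real obstacle here — the lemma is a general group-theoretic statement that does not use any specific feature of $\GL_2(\widehat{\ZZ})$ beyond the fact that $K(n)$ is the kernel of a surjective homomorphism with image $\GL_2(\ZZ/n\ZZ)$. The only subtlety worth noting explicitly is the identity $\pi_n^{-1}(\pi_n(G)) = G\cdot K(n)$, which makes the equivalence transparent: $\pi_n^{-1}(G(n)) = G$ is the same as $G\cdot K(n) = G$, and this holds precisely when $K(n) \subseteq G$.
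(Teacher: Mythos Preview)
Your proof is correct and follows essentially the same element-chasing argument as the paper: both directions use that $K(n)=\pi_n^{-1}(\{I\})\subseteq \pi_n^{-1}(G(n))$ for the forward implication and the existence of $g'\in G$ with $\pi_n(g)=\pi_n(g')$ (so $g(g')^{-1}\in K(n)$) for the reverse. Your added remark that $\pi_n^{-1}(\pi_n(G))=G\cdot K(n)$ is a nice conceptual summary but not a different method.
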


\begin{proof}
Suppose $\pi_n^{-1}(G(n)) \subseteq G$ and let $g\in K(n)$.
Then $\pi_n(g)=1\in G(n)$, which implies $g\in \pi_n^{-1}(G(n))\subseteq G$.

Conversely, suppose $K(n)\subseteq G$ and let $g\in \pi_n^{-1}(G(n))$.
Then $\pi_n(g)\in\pi_n(G)$, meaning there exists $g'\in G$ such that $\pi_n(g)=\pi_n(g')$.
As $\pi_n$ is a group homomorphism, we have $\pi_n(gg'^{-1})=1$, hence $gg'^{-1}\in K(n)\subseteq G$,
so $g\in Gg'=G$. Thus $\pi_n^{-1}(G(n)) \subseteq G$, and the reverse inclusion is immediate.
\end{proof}

Let $N$ be the level of $G$, and note $m_0\mid N$ by Proposition~22 in \cite{Bourdon_2024}. By definition, $N$ is the smallest positive integer such that $\pi_N^{-1}(G(N))=G$.
By Lemma~\ref{preimage-criterion}, this implies that for each $n<m_0\le N$ we have $K(n)\not \subseteq G$.
Hence $G\subsetneq G\,K(n)$ so that $[G\,K(n):G]\ge 2$.

Now $I(E)=[\GL_2(\Zhat):G] = [\GL_2(\Zhat):G\,K(n)]\cdot [G\,K(n):G]$,
so for $n<m_0$ we have
\[
[\GL_2(\Zhat):G\,K(n)] = \frac{I(E)}{[G\,K(n):G]}\le \frac{I(E)}{2}~.
\]
Since $\widetilde G\,K(n)\supseteq G\,K(n)$, it follows that
\[
[\GL_2(\Zhat):\widetilde G\,K(n)]
\le
[\GL_2(\Zhat):G\,K(n)]
\le \frac{I(E)}{2}~.
\]
Using Corollary~\ref{bound on orbits} and Lemma~\ref{bound with index}, we conclude that
\[
r_n\le [\GL_2(\ZZ/n\ZZ):H(n)]
=
[\GL_2(\Zhat):\widetilde G\,K(n)]
\le \frac{I(E)}{2}
\qquad(n<m_0).
\]
We collect our findings in the following corollary:
\begin{corollary}\label{half-index cor} Let $E/\mathbb Q$ be a non-CM elliptic curve, and let $m_0$ and $r_n$ (for $n\mid m_0$) be as above. Then,
\[
r_n\le \frac{I(E)}{2}
\quad(n<m_0)~, \qquad r_{m_0} \leq I(E)~.
\]
\end{corollary}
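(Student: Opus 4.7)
The plan is to sharpen the uniform bound $r_n \leq I(E)$ (already a consequence of Corollary~\ref{bound on orbits} together with Lemma~\ref{bound with index}) whenever $n$ is a \emph{proper} divisor of $m_0$. The key mechanism is that for such $n$, the congruence kernel $K(n)$ fails to sit inside $G$, forcing a strict containment $G \subsetneq G\,K(n)$, which halves the relevant index relative to $I(E)$. For $n=m_0$ there is nothing new to do: the general bound applies verbatim.

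First, I would record that $m_0 \mid N$, where $N$ is the level of $G$; this is Proposition~22 of \cite{Bourdon_2024}. Hence any $n$ with $n < m_0$ also satisfies $n < N$. By the minimality built into the definition of $N$, we have $\pi_n^{-1}(G(n)) \neq G$ for such $n$, and Lemma~\ref{preimage-criterion} then forces $K(n) \not\subseteq G$. In particular $G\,K(n) \supsetneq G$, so the index $[G\,K(n) : G]$ is at least $2$.

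Next, I would invoke multiplicativity of indices:
\[
I(E) = [\GL_2(\Zhat) : G] = [\GL_2(\Zhat) : G\,K(n)] \cdot [G\,K(n) : G] \geq 2\,[\GL_2(\Zhat) : G\,K(n)],
\]
which gives $[\GL_2(\Zhat) : G\,K(n)] \leq I(E)/2$. Since $\widetilde{G}\,K(n) \supseteq G\,K(n)$, the monotonicity of the index yields $[\GL_2(\Zhat) : \widetilde{G}\,K(n)] \leq I(E)/2$ as well. Lemma~\ref{bound with index} identifies this last quantity with $[\GL_2(\ZZ/n\ZZ) : H(n)]$, and Corollary~\ref{bound on orbits} translates the index bound into $r_n \leq I(E)/2$, as desired. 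The remaining case $n = m_0$ is immediate from the previously established inequality $r_n \leq I(E)$.

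There is really no serious obstacle here: once Lemma~\ref{preimage-criterion} is in hand, the factor of $2$ drops out automatically from multiplicativity of indices. The only place where care is needed is the passage from ``$n$ is less than the level'' to the containment failure $K(n) \not\subseteq G$, but this is precisely the content of the preimage criterion combined with the minimality of $N$.
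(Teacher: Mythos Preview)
Your proposal is correct and follows essentially the same argument as the paper: both use Proposition~22 of \cite{Bourdon_2024} together with Lemma~\ref{preimage-criterion} to deduce $K(n)\not\subseteq G$ for $n<m_0$, then invoke multiplicativity of indices and the containment $G\,K(n)\subseteq \widetilde G\,K(n)$ to halve the bound via Lemma~\ref{bound with index} and Corollary~\ref{bound on orbits}. The structure, lemmas, and logic are identical.
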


From our discussion, note we also have $r_{m_0}\leq I(E)/2$ provided $m_0\neq N$. We can now state the main theorem of the section.

\begin{theorem} \label{bounds}
Let $E/\mathbb Q$ be a non-CM elliptic curve, and let $m_0$ be as above. Then
\[
|\mathcal{P}(E)|\le \sum_{n\mid m_0}r_n \leq  \min\left\{ m_0^2,\ 1+\frac{I(E)\,\sigma_0(m_0)}{2}\right\}~,
\]
where $\sigma_0(m_0)$ denotes the number of positive divisors of $m_0$
and $r_n$ denotes the number of $H(n)$-orbits on $V_n$.
\end{theorem}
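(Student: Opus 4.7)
The plan is to prove the three inequalities of the statement in sequence: the passage from $|\PE|$ to $\sum_{n\mid m_0} r_n$, the bound $\sum_{n\mid m_0} r_n\le m_0^2$, and the index-based bound $\sum_{n\mid m_0} r_n\le 1+I(E)\sigma_0(m_0)/2$.

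For the first inequality, I will invoke Proposition~28 of \cite{Bourdon_2024}, which guarantees that every primitive point attached to $E$ occurs on $X_1(n)$ for some divisor $n\mid m_0$. This partitions $\PE$ by level; since a primitive point on $X_1(n)$ above $j(E)$ is in particular a closed point of $X_1(n)$ above $j(E)$, Lemma~\ref{point-orbit bijection} identifies the set of such closed points with the set of $H(n)$-orbits on $V_n$, which has cardinality $r_n$. Summing over $n\mid m_0$ gives $|\PE|\le\sum_{n\mid m_0} r_n$.

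For the bound $\sum_{n\mid m_0} r_n\le m_0^2$, the idea is to partition $(\ZZ/m_0\ZZ)^2$ by exact order. Every element of $(\ZZ/m_0\ZZ)^2$ has order some $d\mid m_0$, and the $d$-torsion subgroup $(\ZZ/m_0\ZZ)^2[d]$ is isomorphic to $(\ZZ/d\ZZ)^2$ via rescaling by $m_0/d$; the elements of exact order $d$ then correspond bijectively to $V_d$ under this isomorphism. Therefore $\sum_{d\mid m_0}|V_d| = m_0^2$, and the claim follows from the trivial estimate $r_n\le |V_n|$.

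For the index bound, I will separate the contributions of $n=1$ and $n=m_0$ from the intermediate divisors. Since $X_1(1)$ has a unique closed point above $j(E)$, we have $r_1=1$. Corollary~\ref{half-index cor} gives $r_n\le I(E)/2$ for all $n<m_0$ and $r_{m_0}\le I(E)$, and since $6\mid m_0$ the number of divisors strictly between $1$ and $m_0$ is $\sigma_0(m_0)-2$, whence
\[
\sum_{n\mid m_0}r_n \;\le\; 1 + (\sigma_0(m_0)-2)\cdot\frac{I(E)}{2} + I(E) \;=\; 1 + \frac{I(E)\,\sigma_0(m_0)}{2}.
\]
Nothing in this argument is substantively difficult; the entire proof is assembly of the previously established lemmas, with the only genuine new observation being the exact-order partition of $(\ZZ/m_0\ZZ)^2$ in the second step.
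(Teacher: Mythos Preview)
Your argument follows the paper's proof essentially line for line: the same reduction to levels $n\mid m_0$, the same use of Lemma~\ref{point-orbit bijection} and Corollary~\ref{half-index cor}, and the same decomposition $r_1 + r_{m_0} + \sum_{1<n<m_0} r_n$ for the index bound. Your partition-by-exact-order argument for $\sum_{d\mid m_0}|V_d|=m_0^2$ is in fact a little cleaner than the paper's, which instead writes $|V_n|=n^2\prod_{p\mid n}(1-1/p^2)$ explicitly and then appeals to strong induction on $m_0$.

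One small correction: the parenthetical ``since $6\mid m_0$'' is false in general---the paper's own Table~\ref{tab:serre-m0-bounds} lists $m_0\in\{1,2,3,4,8\}$ among the possibilities for Serre curves. What you actually need (and what the paper also tacitly uses) is only that $m_0>1$, so that $1$ and $m_0$ are distinct divisors and the count $\sigma_0(m_0)-2$ for the strictly intermediate divisors is valid; the case $m_0=1$ is handled trivially since then $\sum_{n\mid m_0}r_n=r_1=1$.
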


\begin{proof}
By \cite{Bourdon_2024}, every primitive point for $E$ lies on $X_1(n)$ for some $n\mid m_0$.
For each such $n$, the primitive points lying on $X_1(n)$ form a subset of the closed points above $j(E)$,
and the latter are counted by $r_n$.
Summing over all $n\mid m_0$ gives $|\mathcal{P}(E)|\le \sum_{n\mid m_0} r_n$.

For the second inequality, we use Corollary \ref{half-index cor} and the fact that there is only one closed point in $X_1(1)$ above $j(E)$ to first obtain
\[
\sum_{n\mid m_0} r_n
=
r_1+r_{m_0}+ \sum_{\substack{n\mid m_0 \\ 1<n<m_0}} r_n
\le
1+I(E)+ \frac{I(E)}{2}\, (\sigma_0(m_0)-2)
=
1+\frac{I(E)\,\sigma_0(m_0)}{2}.
\]

One also has the crude bound
\[
\sum_{n\mid m_0} r_n\le \sum_{n\mid m_0} |V_n|
=
\sum_{n \mid m_0} n^2 \prod_{p \mid n} \left( 1- \dfrac{1}{p^2}\right)=m_0^2~,
\]
since for each $n$ the number of $H(n)$-orbits on $V_n$ is at most $|V_n|$. Note counting $|V_n|$ is a simple application of the inclusion-exclusion principle, while the final equality can be shown by strong induction on $m_0$. The result follows.
\end{proof}

\begin{ex} If $I(E)=2,$ then  $E$ is called a Serre curve. By \cite{serre1972proprietes}, Serre curves form a subclass of non-CM elliptic curves
for which the $\ell$-adic Galois representation is surjective for every prime $\ell>3$, and the remaining contribution
at $\ell\in\{2,3\}$ is constrained so that the induced $6$-adic level divides $24$.
In the notation of this paper, this implies $m_0=m_0(E)\mid 24$ for every Serre curve $E$. In the special case $m_0=1$, the characterization of primitive points immediately yields $|\mathcal{P}(E)|=1$, namely the unique closed point above $j(E)$ on $X_1(1)$.

Table~\ref{tab:serre-m0-bounds} lists the admissible values of $m_0$ and the corresponding numerical values of the
two general upper bounds derived above:
\[ \sum_{n\mid m_0} |V_n|=m_0^2
\qquad\text{and}\qquad
1+\frac{I(E)\,\sigma_0(m_0)}{2}~.
\] Since $E$ is Serre, $I(E)=[\GL_2(\Zhat):\rho_E(\Gal_{\QQ})]=2$,
so the index-based bound specializes to $1+\sigma_0(m_0)$.
\begin{table}[h]
\centering
\begin{tabular}{c|c|c}
$m_0$ &
$m_0^2$ &
$1+(I(E)\sigma_0(m_0))/2$ \\
\hline
$1$  & $1$   & $2$ \\
$2$  & $4$   & $3$ \\
$3$  & $9$   & $3$ \\
$4$  & $16$  & $4$ \\
$6$  & $36$  & $5$ \\
$8$  & $64$  & $5$ \\
$12$ & $144$ & $7$ \\
$24$ & $576$ & $9$ \\
\end{tabular}
\caption{Bounds for $|\mathcal{P}(E)|$ for Serre curves, as a function of $m_0\mid 24$.}
\label{tab:serre-m0-bounds}
\end{table}
\end{ex}
Except in the case $m_0=1$, the index-based bound is strictly sharper than the bound obtained from $\sum_{n\mid m_0}|V_n| = m_0^2$. In Section~5 we will show that $m_0=1$ occurs only for Serre curves. We will also prove that, in fact, a Serre curve has exactly one primitive point.

\begin{remark}
One may stop at the sharper bound
\[
\lvert \PE \rvert \leq \sum_{n \mid m_0} r_n \leq \sum_{n \mid m_0} \min\{|V_n|,\ [\GL_2(\Zn):H(n)]\}~,
\]
which records the contribution from each level $n\mid m_0$ separately. The drawback is that the right-hand side is not governed by a single numerical invariant of $E$: it depends on the entire collection of indices $\{[\GL_2(\Zn):H(n)]\}_{n\mid m_0}$, so any uniform estimate must control these indices simultaneously.

By contrast, the adelic index
\[
I(E)=[\GL_2(\Zhat):\rho_E(\GalQ)]
\]
is a single invariant that dominates every finite-level index. For each $n$, reduction $\GL_2(\Zhat)\twoheadrightarrow \GL_2(\Zn)$ gives
\[
[\GL_2(\Zn):H(n)] \leq [\GL_2(\Zn):\rho_{E,n}(\GalQ)] \leq I(E)~,
\]
so one can replace level-by-level subgroup data by the single scalar $I(E)$ (or any a priori upper bound for it), which is available in effective form in \cite{furio2025effectiveboundsadelicgalois}. Moreover, there is a conjectural finite list of possible adelic images (hence possible values of $I(E)$) for elliptic curves over $\QQ$ in \cite{zywina2024explicitopenimageselliptic}; ranging over that list yields a finite menu of uniform bounds for $\lvert \PE\rvert$.
\end{remark}

\section{Criteria for Uniqueness}
We have already shown that every primitive point lying above $j(E)$ occurs at some level $n\mid m_0$, where \(m_0=m_0(E)\) is the level of the \(m\)-adic Galois representation associated to $E$. This naturally leads to the problem of characterizing those curves for which the primitive-point set $\PE$ consists of a single element. In this section we address this question in two stages. First, we formulate several general conditions, each equivalent to the uniqueness of the primitive point, that clarify the structural mechanisms by which multiple primitive points can arise. Second, we give a concrete sufficient criterion in terms of (i) local transitivity properties of the images \(G(n)\) of the mod \(n\) Galois representations and (ii) the absence of entanglement among the relevant division fields. We conclude with examples illustrating the effectiveness of this criterion in practice.
\subsection{General Conditions}
For positive integers $n\ge 1$, let $G(n), H(n), V_n$ be as defined in previous sections. To prepare for the first criterion, we start with a lemma.

\begin{lemma}\label{some equivalent statements}
Let $E/\QQ$ be a non-CM elliptic curve and let $n>1$ be a divisor of $m_0=m_0(E)$. The following are equivalent:
\begin{enumerate}
    \item The natural map $f_n:X_1(n)\rightarrow X_1(1)$ satisfies $\deg(x)=\deg(f_n)$ for some closed point $x\in X_1(n)$ with $j(x)=j(E)$.
    \item $H(n)$ acts transitively on $V_n$.
    \item There is a unique closed point $x\in X_1(n)$ with $j(x)=j(E)$.
\end{enumerate}
\end{lemma}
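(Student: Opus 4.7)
The plan is to route everything through condition (3) and pivot between the other two. Statement (2)$\iff$(3) should fall directly out of Lemma~\ref{point-orbit bijection}, while (1)$\iff$(3) rests on a standard fiber-degree count for the natural projection $f_n:X_1(n)\to X_1(1)$.

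For (2)$\iff$(3): Lemma~\ref{point-orbit bijection} places the closed points of $X_1(n)$ above $j(E)$ in bijection with the $H(n)$-orbits on $V_n$. Uniqueness of such a closed point is then literally the assertion that $H(n)$ has a single orbit on $V_n$, i.e.\ acts transitively. No additional argument is needed beyond invoking the lemma.

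For (1)$\iff$(3): Since $E$ is non-CM, $j(E)\notin\{0,1728\}$, so the natural map $f_n$ is unramified above the $\QQ$-rational point $[j(E)]\in X_1(1)$. Standard fiber-degree counting then gives
\[
\deg(f_n)\;=\sum_{x\in f_n^{-1}([j(E)])}\deg(x),
\]
with each summand a positive integer. If (3) holds, the sum collapses to a single term, forcing $\deg(x)=\deg(f_n)$, which is (1). Conversely, if (1) holds and some $x$ achieves $\deg(x)=\deg(f_n)$, then any additional closed point in the fiber would push the right-hand side strictly above $\deg(f_n)$, so $x$ must be the unique such point, giving (3).

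The only technical obstacle — a minor one — is the unramifiedness of $f_n$ above $[j(E)]$, needed to drop the ramification indices from the fiber-degree formula; this is standard but should be cited or argued. A clean alternative that avoids appealing to ramification is purely combinatorial: each closed point $x$ above $j(E)$ corresponds to an $H(n)$-orbit $O_x\subseteq V_n$ with $\deg(x)=|O_x|/c$, where $c=2$ for $n>2$ and $c=1$ otherwise. Summing over orbits gives $\sum_x\deg(x)=|V_n|/c$, which matches the explicit formula $\deg(f_n)=c_f\cdot n^2\prod_{p\mid n}(1-1/p^2)=|V_n|/c$ recalled in Section~2.2, so the degree sum identity follows without further input and the rest of the argument goes through as above.
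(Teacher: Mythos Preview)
Your argument is correct. The equivalence $(2)\Leftrightarrow(3)$ is handled identically to the paper. For the remaining link, the paper proves $(1)\Leftrightarrow(2)$ directly: it invokes the orbit--degree dictionary $\deg(x)=\#(H(n)\cdot v)/c$ and the formula $\deg(f_n)=|V_n|/c$ (with $c=2$ for $n>2$, $c=1$ for $n=2$), so $\deg(x)=\deg(f_n)$ holds for a single $x$ iff its orbit already fills $V_n$. Your combinatorial alternative is the same ingredients repackaged as the identity $\sum_x \deg(x)=|V_n|/c=\deg(f_n)$, from which you extract $(1)\Leftrightarrow(3)$; this is a harmless reorganization. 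Your first route via unramifiedness of $f_n$ above $j(E)\notin\{0,1728\}$ is a genuinely different (and perfectly valid) argument that the paper does not use; it trades the orbit--degree formula for a geometric fact about the cover, at the cost of needing to justify the ramification locus.
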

\begin{proof}
The equivalence of $(2)$ and $(3)$ follows directly from Lemma \ref{point-orbit bijection}. To see the equivalence of $(1)$ and $(2)$, fix a divisor $n\mid m_0$ with $n>1$, and let $x\in X_1(n)$ be a closed point with $j(x)=j(E)$. The natural projection map $f_n:X_1(n)\rightarrow X_1(1)$ has degree given by
\[
\deg(f_n)=\begin{cases}
\dfrac{n^2}{2}\displaystyle\prod_{p\mid n}\left(1-\dfrac{1}{p^2}\right) = \dfrac{|V_n|}{2} & n>2,\\[1.2em]
n^2\displaystyle\prod_{p\mid n}\left(1-\dfrac{1}{p^2}\right)=|V_n| & n = 2.
\end{cases}
\]

By the orbit--degree dictionary  (Proposition 34 in \cite{Bourdon_2024}), $x$ corresponds to the orbit $H(n)\cdot v$ for some vector $v\in V_n$, and one has 
\[
\deg(x)=\frac{\#(H(n)\cdot v)}{2}\quad (n>2)~,\qquad \deg(x)=\#(H(n)\cdot v)\quad (n = 2)~.
\]
Hence, we see that $\deg(x)=\deg(f_n)$ if and only if $\#(H(n)\cdot v)=|V_n|$, as desired.
\end{proof}

In particular, for $p$ prime, a closed point $x\in X_1(p)$ with $j(x)=j(E)$ is not primitive if and only if $H(p)$ acts transitively on $V_p$. Serre's Uniformity Conjecture implies that there is no primitive point at level $p$ for $p>37$ prime.

Using this lemma, we can establish the following criterion for there being a unique primitive point attached to $E$.

\begin{theorem}\label{uniqueness criterion}
Let $E$ be a non-CM elliptic curve defined over $\mathbb Q$. The following are equivalent:
\begin{enumerate}
    \item $E$ has a unique primitive point attached to it.
    \item $H(n)$ acts transitively on $V_n$ for all $n\mid m_0$.
    \item $H(m_0)$ acts transitively on $V_{m_0}$.
    \item There is a unique closed point $x\in X_1(m_0)$ with $j(x)=j(E)$.
\end{enumerate}
\end{theorem}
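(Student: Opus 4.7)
The plan is to establish the four equivalences by leveraging Lemma~\ref{some equivalent statements} together with the minimal-level characterization of primitive points recalled in Section~2. The equivalence (3)$\Leftrightarrow$(4) is the direct content of Lemma~\ref{some equivalent statements} applied to $n=m_0$, with $m_0=1$ trivial since $V_1$ is a singleton and $X_1(1)$ has a unique closed point above $j(E)$. The implication (2)$\Rightarrow$(3) is immediate from $m_0\mid m_0$.

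The main technical step is (3)$\Rightarrow$(2). First I would fix $n\mid m_0$ and consider the natural reduction $\pi_{m_0,n}:\GL_2(\ZZ/m_0\ZZ)\twoheadrightarrow \GL_2(\ZZ/n\ZZ)$. Because $\rho_{E,n}$ factors as $\pi_{m_0,n}\circ \rho_{E,m_0}$, one has $\pi_{m_0,n}(H(m_0))=H(n)$. Entrywise reduction also induces a map $\varphi:V_{m_0}\to V_n$, which is well-defined since any common divisor of the reduced coordinates with $n$ would already divide $\gcd(a,b,m_0)=1$. The key observation is that $\varphi$ is surjective: given $(a',b')\in V_n$, lift entrywise to $\ZZ/m_0\ZZ$ and, using CRT, adjust each coordinate modulo the primes of $m_0$ coprime to $n$ (e.g.\ making one coordinate $\equiv 1$ at such primes) so that no prime of $m_0$ divides both lifts. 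Since $\varphi$ is $H(m_0)$-equivariant with $H(m_0)$ acting on $V_n$ through $\pi_{m_0,n}$, transitivity at level $m_0$ pushes forward to transitivity at level $n$.

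Finally, I would handle (1)$\Leftrightarrow$(2). For (2)$\Rightarrow$(1), every primitive point for $E$ lies on $X_1(n)$ for some $n\mid m_0$ by Proposition~28 of \cite{Bourdon_2024}, and the unique closed point on $X_1(1)$ above $j(E)$ is vacuously primitive. For $n\mid m_0$ with $n>1$, hypothesis (2) combined with Lemma~\ref{some equivalent statements} yields a unique closed point $x$ above $j(E)$ with $\deg(x)=\deg(f_n)$; taking $n'=1$ in the primitivity test gives $\deg(x)=\deg(f_n)\cdot\deg(f_n(x))$, so $x$ is not primitive. Hence $\PE$ reduces to the degree-$1$ point at level~$1$. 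For (1)$\Rightarrow$(2), the unique element of $\PE$ must be the level-$1$ point. Given any $n\mid m_0$ and any closed point $x\in X_1(n)$ above $j(E)$, its associated primitive point is the image of $x$ at the minimal level $a\mid n$ with $\deg(x)=\deg(f)\cdot\deg(f(x))$ (as recalled in Section~2); the hypothesis forces $a=1$, so $\deg(x)=\deg(f_n)$, and Lemma~\ref{some equivalent statements} delivers transitivity of $H(n)$ on $V_n$. The main obstacle is the descent step (3)$\Rightarrow$(2), whose proof rests entirely on the CRT-based lifting of primitive vectors across $V_{m_0}\twoheadrightarrow V_n$; once this is in place, the remaining equivalences unwind level by level via Lemma~\ref{some equivalent statements}.
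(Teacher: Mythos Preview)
Your proof is correct, and the handling of (1)$\Leftrightarrow$(2) and (3)$\Leftrightarrow$(4) matches the paper's in substance. The genuine difference lies in the descent step (3)$\Rightarrow$(2). The paper argues by contrapositive through the modular-curve side: if $H(n)$ fails to act transitively on $V_n$, then every closed point $x\in X_1(n)$ above $j(E)$ has $\deg(x)<\deg(f_n)$; pulling back along the natural map $f:X_1(m_0)\to X_1(n)$ and using $\deg(x'')\le \deg(f)\cdot\deg(x)$ together with multiplicativity $\deg(f)\cdot\deg(f_n)=\deg(f_{m_0})$ forces $\deg(x'')<\deg(f_{m_0})$, whence $H(m_0)$ is non-transitive. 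You instead give a direct group-theoretic descent: the reduction $\pi_{m_0,n}$ carries $H(m_0)$ onto $H(n)$, the coordinate-reduction map $V_{m_0}\to V_n$ is a surjective $H(m_0)$-equivariant map (with the CRT lifting you describe), and transitivity pushes forward along equivariant surjections. Your route is more elementary in that it never touches degrees of closed points or the natural maps between the $X_1(n)$, while the paper's route has the advantage of staying entirely within the degree framework already set up in Section~2 and Lemma~\ref{some equivalent statements}, so no separate lifting lemma for $V_{m_0}\twoheadrightarrow V_n$ is needed.
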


\begin{proof}
Note the unique closed point $x'\in X_1(1)$ above $j(E)$ is always primitive as no natural projection map onto a lower level exists. Hence, $E$ has a unique primitive point attached to it if and only if $x'$ is the (unique) primitive point associated to every other closed point above $j(E)$, which is if and only if the natural map $f_n:X_1(n)\rightarrow X_1(1)$ satisfies $\deg(x)=\deg(f_n)\cdot \deg(x')$ for all $x\in X_1(n)$ with $j(x)=j(E)$ and for all $n\mid m_0$ with $n>1$. Now applying the above lemma (noting $\deg(x')=1$) gives $(1)\Leftrightarrow (2) \Rightarrow (3) \Leftrightarrow (4)$.

The final step is to show $(3)\Rightarrow (2)$. Arguing by contrapositive, suppose $H(n)$ does not act transitively on $V_n$ for some $n\mid m_0$. By Lemma \ref{some equivalent statements}, this means there exists an $n\mid m_0$ such that for all closed points $x\in X_1(n)$ with $j(x)=j(E)$ we have $\deg(x)<\deg(f_n)$. Consider the natural map $f:X_1(m_0)\rightarrow X_1(n)$, and say $f(x'')=x$.  Then, combining the facts that $\deg(x)<\deg(f_n)$ and $\deg(x'')\le \deg(f)\cdot \deg(x)$,  we get \[\deg(x'')<\deg(f)\cdot \deg(f_n) = \deg(f_{m_0})~,\]
which implies $H(m_0)$ does not act transitively on $V_{m_0}$ by Lemma \ref{some equivalent statements}, as we needed to show.
\end{proof}
\subsection{Sufficient Criteria}
Next, we isolate two concrete hypotheses, local transitivity at each prime power dividing $m_0$ and a stabilizer-surjectivity strengthening of entanglement-freeness for coprime division fields. The hypotheses imply transitivity of $G(n)\subseteq H(n)$ on $V_n$ for every $n \mid m_0$, hence a unique primitive point by Theorem \ref{uniqueness criterion}.

Fix a non-CM elliptic curve $E/\QQ$. For each $n\geq 1$, write $K_n:=\QQ(E[n[).$ We define the two following statements: \\
$(\LT_G) \quad$ For every prime power $\ell^k \mid m_0$, $G(\ell^k)$ acts transitively on $V_{\ell^k} \subseteq (\ZZ/\ell^k\ZZ)^2.$
$(\EF_{m_0})\quad$ For every coprime $a,b$ with $ab \mid m_0,$ let $L_{a,b} \coloneq K_a \cap K_b$ and $Q_{a,b} \coloneq \Gal(L_{a,b}/\QQ)$. Under restriction, the natural surjections $$\Gal(K_a/\QQ) \twoheadrightarrow Q_{a,b} \quad \Gal(K_b/\QQ) \twoheadrightarrow Q_{a,b}$$ are surjective stabilizers of exact-order points: for any $P_a \in E[a]$ and $P_b \in E[b]$ of exact order $a,b$ (respectively), $$\text{res}_a (\text{Stab}_{\Gal(K_a/\QQ)}(P_a))=Q_{a,b} \qquad \text{res}_{b}(\text{Stab}_{\Gal(K_b/\QQ)}(P_b))=Q_{a,b}.$$ 

The resulting uniqueness statement is recorded next.

\begin{theorem}\label{sufficient criteria}
    Assume $(\LT_G)$ and $(\EF_{m_0})$ hold. Then, the set $\PE$ of primitive points associated to a non-CM elliptic curve $E/\QQ$ has only one primitive point.
\end{theorem}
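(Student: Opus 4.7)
The plan is to invoke Theorem \ref{uniqueness criterion}: since $G(n)\subseteq H(n)$ for every $n$, it suffices to prove that $G(n)$ acts transitively on $V_n$ for every $n\mid m_0$. I would establish this by strong induction on the number of distinct prime divisors $\omega(n)$ of $n$. The base case $\omega(n)=1$, where $n=\ell^k$ is a prime power, is exactly the local transitivity hypothesis $(\LT_G)$. For the inductive step, factor $n=ab$ with $a,b>1$ coprime, so that $ab\mid m_0$, and apply the Chinese Remainder Theorem to identify $(\ZZ/n\ZZ)^2\cong(\ZZ/a\ZZ)^2\times(\ZZ/b\ZZ)^2$; this restricts to a bijection $V_n\leftrightarrow V_a\times V_b$, since a vector has exact order $n$ if and only if its components have exact orders $a$ and $b$, respectively.

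The crucial structural input is the Goursat-type fiber-product description of the Galois group of the compositum. Since $K_n=K_aK_b$ with $L_{a,b}=K_a\cap K_b$, the natural injection $G(n)\hookrightarrow G(a)\times G(b)$ identifies $G(n)$ with the set of pairs $(g_a,g_b)$ satisfying $\text{res}_a(g_a)=\text{res}_b(g_b)$ in $Q_{a,b}$. Given $(v_a,v_b),(w_a,w_b)\in V_a\times V_b$, the inductive hypothesis produces elements $g_a\in G(a)$ with $g_av_a=w_a$ and $g_b\in G(b)$ with $g_bv_b=w_b$. The set of valid choices of $g_a$ is the entire coset $g_a\cdot \text{Stab}_{G(a)}(v_a)$, which by $(\EF_{m_0})$ maps onto all of $Q_{a,b}$ under $\text{res}_a$. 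We may therefore replace $g_a$ by an element $g_a'$ of this coset with $\text{res}_a(g_a')=\text{res}_b(g_b)$; the pair $(g_a',g_b)$ then lies in $G(n)$ and sends $(v_a,v_b)$ to $(w_a,w_b)$, completing the induction.

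Applied at $n=m_0$, this shows $G(m_0)$, and hence $H(m_0)\supseteq G(m_0)$, acts transitively on $V_{m_0}$, so Theorem \ref{uniqueness criterion} yields $|\PE|=1$. The most delicate point is the bookkeeping around the fiber product: one must match the abstract Galois stabilizers appearing in $(\EF_{m_0})$ with the matrix stabilizers of $v_a\in V_a$ under the basis identification $E[a]\cong(\ZZ/a\ZZ)^2$, and verify that the hypothesis, stated for arbitrary exact-order points $P_a$, indeed covers each $v_a$ that arises in the inductive argument. Once this identification is in place, the remainder of the argument is a clean fibered-transitivity principle: $(\LT_G)$ supplies local elements prime-power by prime-power, and the stabilizer-surjectivity condition $(\EF_{m_0})$ glues them into a single global element of $G(n)$ compatible with the fiber product.
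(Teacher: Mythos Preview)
Your proposal is correct and follows essentially the same approach as the paper: induction on $\omega(n)$, with the base case given by $(\LT_G)$, and the inductive step carried out by identifying $G(ab)$ with the fiber product $G(a)\times_{Q_{a,b}}G(b)$ via CRT and Goursat, then using the stabilizer-surjectivity hypothesis $(\EF_{m_0})$ to adjust one coordinate so that the pair lands in the fiber product. The paper packages the fibered-transitivity step as an abstract lemma about group actions $A\curvearrowright X$, $B\curvearrowright Y$ with a common quotient $Q$, but the content is identical to your coset argument, and your remark about matching Galois stabilizers with matrix stabilizers is handled in the paper by a separate proposition immediately after the proof.
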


\begin{proof}
    Let $a,b \geq 1$ be coprime. Chinese Remainder Theorem gives $E[ab] \cong E[a] \times E[b]$ over $\QQ$, hence $K_{ab}=K_aK_b$. With $L_{a,b}=K_a \cap K_b$ and $Q_{a,b}=\Gal(L_{a,b}/\QQ),$ restriction yields $$\Gal(K_{ab}/\QQ) \cong \Gal(K_a/\QQ) \times_{Q_{a,b}} \Gal(K_b/\QQ).$$ Identifying $\Gal(K_t/\QQ) \cong G(t)$ for $t \in \{a,b,ab\}$, one gets: Under the CRT identification $$\GL_2(\ZZ/ab\ZZ) \cong \GL_2(\ZZ/a\ZZ) \times \GL_2(\ZZ/b\ZZ),$$ the image of $G(ab)$ equals the fiber product $G(a) \times_{Q_{a,b}}G(b)$. Let $A \curvearrowright X$ and $B \curvearrowright Y$ be groups acting transitively on sets, and let $\phi: A \twoheadrightarrow Q$, $\psi: B \twoheadrightarrow Q$ be surjections. Define $$A \times_{Q} B  \coloneq  \{(a,b) \in A \times B : \phi(a)=\psi(b)\},$$ acting on $X \times Y$ componentwise. Fix $x_0 \in X, y_0 \in Y$. If $\phi(\text{Stab}_A(x_0))=Q$, then $A \times_{Q} B$ is transitive on $X \times Y$. Indeed, given $(x,y) \in X \times Y$, choose $a \in A$ with $ax_0=x$ and $b \in B$ with $by_0=y.$ Set $q \coloneq \phi(a)^{-1}\psi(b) \in Q$. Choose $s \in \text{Stab}_{A}(x_0)$ with $\phi(s)=q$. Then, $$\phi(as)=\phi(a)\phi(s)=\phi(a)\phi(a)^{-1}\psi(b)=\psi(b),$$ so $(as,b) \in A \times_{Q}B$ and $(as,b)(x_0,y_0)=(x,y).$ Letting $A=G(a)$, $B=G(b)$, $X=V_a$, $Y=V_b$, and $Q=Q_{a,b}$. We then have that if $G(a)$ and $G(b)$ act transitively on $V_a$ and $V_b$ respectively, then $G(a) \times_{Q_{a,b}} G(b)$ acts transitively on $V_{ab}$. We now show that for every $n \mid m_0,$ the group $G(n)$ acts transitively on $V_n$. We prove by induction on $\omega(n),$ the number of distinct prime divisors of $n$. If $\omega(n)=1,$ then $n=\ell^k$, so the base case holds according to $(\LT_G).$ If $n=ab$ with $(a,b)=1$ and $\omega(a),\omega(b)<\omega(n),$ then $$V_{ab} \cong V_a \times V_b.$$ By induction, $G(a)$ and $G(b)$ are transitive on $V_a$ and $V_b$. The map $G(a) \twoheadrightarrow G_{a,b}$ and $G(b) \twoheadrightarrow Q_{a,b}$ are induced by restriction to $L_{a,b}$. $(\EF_{m_0})$ says these maps are surjective on stabilizers of exact-order points, hence on stabilizers of vectors in $V_a$ and $V_b$. Then, we get transitivity of $G(ab)$ on $V_{ab}$. Thus, $G(n)\subseteq H(n)$ is transitive on $V_n$ for all $n \mid m_0$. By Theorem \ref{uniqueness criterion}, this implies that there exists only one primitive point associated to $E$.
\end{proof}
We also record a more restrictive (and simpler) entanglement-freeness hypothesis:

$(\EF_{m_0}^*) \qquad$ For all coprime integers $a,b$ with $ab\mid m_0$, $K_a\cap K_b=\mathbb{Q}.$

Condition $(\EF_{m_0}^*)$ implies $(\EF_{m_0})$. Indeed, if $K_a\cap K_b=\mathbb{Q}$ then the associated entanglement quotient $Q_{a,b}$ is trivial, and consequently the stabilizer-surjectivity requirement appearing in $(\EF_{m_0})$ is automatic. In particular, either of the hypotheses $(\EF_{m_0})$ or $(\EF_{m_0}^*)$, when combined with $(\LT_G)$, yields a sufficient criterion for $E$ to have a unique primitive point. Since $(\EF_{m_0}^*)$ is strictly stronger than $(\EF_{m_0})$, one expects $(\EF_{m_0})$ to hold for a larger class of elliptic curves.

\begin{corollary}
    Assume $(\LT_G)$ and $(\EF_{m_0}^*)$ hold. Then, the set $\PE$ of primitive points associated to a non-CM elliptic curve $E/\QQ$ has only one primitive point.
\end{corollary}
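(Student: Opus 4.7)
The plan is to deduce this directly from Theorem \ref{sufficient criteria} by showing that $(\EF_{m_0}^*)$ is a strictly stronger version of $(\EF_{m_0})$. Once this implication is established, Theorem \ref{sufficient criteria}, applied with hypotheses $(\LT_G)$ and $(\EF_{m_0})$, yields $|\PE|=1$ immediately.

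First, I would fix coprime integers $a,b$ with $ab\mid m_0$ and unwind the entanglement data. By $(\EF_{m_0}^*)$ the intersection $L_{a,b}=K_a\cap K_b$ equals $\QQ$, so the quotient $Q_{a,b}=\Gal(L_{a,b}/\QQ)$ is the trivial group. This is where the restriction condition of $(\EF_{m_0})$ becomes substantially easier to verify: the target of the surjection is a one-element group.

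Next, I would verify the stabilizer-surjectivity clauses of $(\EF_{m_0})$ hold vacuously. For any $P_a\in E[a]$ and $P_b\in E[b]$ of exact order $a$ and $b$ respectively, the restrictions
\[
\mathrm{res}_a\bigl(\mathrm{Stab}_{\Gal(K_a/\QQ)}(P_a)\bigr),\qquad \mathrm{res}_b\bigl(\mathrm{Stab}_{\Gal(K_b/\QQ)}(P_b)\bigr)
\]
land in the trivial group $Q_{a,b}=\{1\}$, and therefore surject onto $Q_{a,b}$ automatically. Hence $(\EF_{m_0})$ holds for every coprime pair $(a,b)$ with $ab\mid m_0$.

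With $(\LT_G)$ given by hypothesis and $(\EF_{m_0})$ now in place, Theorem \ref{sufficient criteria} applies and gives $|\PE|=1$. There is no genuine obstacle here; the corollary should be viewed as a user-friendly restatement of Theorem \ref{sufficient criteria} designed for the common situation in which linear disjointness of division fields can be checked directly, bypassing the more technical fiber-product stabilizer analysis.
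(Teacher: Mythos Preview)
Your proposal is correct and follows exactly the paper's approach: the paper also observes that $(\EF_{m_0}^*)$ implies $(\EF_{m_0})$ because $K_a\cap K_b=\QQ$ forces $Q_{a,b}$ to be trivial, making the stabilizer-surjectivity condition automatic, and then invokes Theorem~\ref{sufficient criteria}.
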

We provide some motivation behind Theorem \ref{sufficient criteria}: The primitive point problem, as we have seen thus far, is governed by a single mechanism, which is the orbit structure. In general, \ref{uniqueness criterion} has shown that transitivity on all levels dividing $m_0$ prevents any closed point above $j(E)$ above level 1 from being a primitive point. The local transitivity hypothesis $\LT_G$ addresses this orbit problem at prime powers: it asserts that $G(\ell^k)$ is already transitive on $V_{\ell^k}$. The obstruction to extending this to general $n$ is the failure of the mod-$ab$ image to decompose as a direct product when $(a,b)=1$. In general, one has only a fiber-product description $$G(ab)\subset G(a) \times G(b)$$ controlled by the entanglement field $K_a \cap K_b$. Thus the global difficulty is, even when $G(a)$ and $G(b)$ are individually transitive on $V_a$ and $V_b$ respectively, the compatibility constraint defining the fiber product can, in principle, split $V_{ab} \cong V_a \times V_b$ into multiple orbits. this mechanism leads to entanglement creating multiple closed points above $j$ on $X_1(ab)$.

$\EF_{m_0}$ is defined to isolate minimal additional input needed to prevent that orbit-splitting. Instead of being a ``no entanglement'' condition, it is a stabilizer-level condition that guarantees the fiber product still acts transitively on primitive level structure, hence that entanglement still present in $K_a \cap K_b$ is irrelevant for the primitive-point problem.

What $\EF_{m_0}$ is, is that, if we fix coprimes $a,b \mid m_0$ and let $L_{a,b}$ and $Q_{a,b}$ be as defined previously, under the CRT identification, the mod-$ab$ image is the fiber product $$G(ab) \cong G(a) \times_{Q_{a,b}} G(b)$$ where the maps to $Q_{a,b}$ are given by restriction to $L_{a,b}$. The fiber-product constraint is a matching condition in the quotient $Q_{a,b}$. $\EF_{m_0}$ asserts that this matching condition can always be resolved within a point stabilizer: for any point $P_a$ of exact order $a,$ the restriction map to $Q_{a,b}$ is surjective on $\text{Stab}(P_a)$, and similarly on the $b$-side. Equivalently, by Galois correspondence, if $k(P_a)$ denotes the field of definition of $P_a$ inside $K_a$, then $$k(P_a) \cap L_{a,b}=\QQ$$ and likewise for $P_b$. Thus, $\EF_{m_0}$ allows nontrivial entanglement at the level of full division fields while requiring that entanglement not be detected by the fields generated by individual points of exact order $a,b$. This is the geometric condition needed to keep the fiber of $X_1(ab) \to X_1(1)$ above $j$ a single orbit.

The standard entanglement viewpoint treats $L_{a,b}=K_a \cap K_b$ as the primary object: one studies its size or type and the induced subgroup $G(ab) \subseteq G(a) \times G(b)$. For the primitive-point problem, that perspective is too coarse: the existence of entanglement does not by itself determine whether $V_a$ or $V_b$ split into multiple orbits. $\EF_{m_0}$ refines the entanglement analysis to the precise granularity relevant to $X_1(n)$: it distinguishes ``harmful'' entanglement (which is visible in stabilizers and forces orbit-splitting on $V_n$) from ``harmless'' entanglement (which may exist in $K_a \cap K_b$ but does not affect orbits). This yields the remark that uniqueness of primitive point is controlled by stabilizer-detectable entanglement, not by entanglement per se. \\

We now state an algorithm that checks $(\LT_G)$.
\begin{algorithm}[H]
\caption{Transitivity on exact-order $\ell^k$ vectors $(\LT_G)$}
\label{alg:transitivity-lk}
\begin{algorithmic}[1]
\Statex \textbf{Input:} prime $\ell$, integer $k\ge 1$, generators $S\subseteq \GL_2(\ZZ/\ell^k\ZZ)$ for $G=\langle S\rangle$.
\Statex \textbf{Output:} \textbf{TRUE}/\textbf{FALSE} (and orbit/expected sizes).
\State $n\gets \ell^k$, $v_0\gets (1,0)\in (\ZZ/n\ZZ)^2$.
\State $O\gets$ the smallest subset of $(\ZZ/n\ZZ)^2$ containing $v_0$ and closed under $v\mapsto g v$ for all $g\in S$ \emph{(compute by closure/BFS)}.
\State $\textsf{orb}\gets |O|$, \quad $\textsf{exp}\gets n^2-(n/\ell)^2$ \Comment{$=\#\{v:\ord(v)=n\}$}
\State \Return $(\textsf{orb}=\textsf{exp},\,\textsf{orb},\,\textsf{exp})$.
\end{algorithmic}
\end{algorithm}
\begin{remark}
In the hypothesis $\mathrm{LT}_G$, it suffices to check transitivity only at the top prime powers $\ell^{e}\parallel m_0$. Indeed, if $e=v_\ell(m_0)$ and $G(\ell^{e})$ acts transitively on vectors of exact order $\ell^{e}$ in $(\mathbb Z/\ell^{e}\mathbb Z)^2$, then for every $1\le k\le e$ the reduction map
\[
\pi_{e,k}:\GL_2(\mathbb Z/\ell^{e}\mathbb Z)\to \GL_2(\mathbb Z/\ell^{k}\mathbb Z)
\]
sends $G(\ell^{e})$ onto $G(\ell^{k})$. Given $v,w\in (\mathbb Z/\ell^{k}\mathbb Z)^2$ of exact order $\ell^{k}$, choose lifts $\tilde v,\tilde w\in (\mathbb Z/\ell^{e}\mathbb Z)^2$ with $\tilde v\equiv v$ and $\tilde w\equiv w\pmod{\ell^{k}}$. Since $v,w$ are not divisible by $\ell$, the lifts $\tilde v,\tilde w$ have exact order $\ell^{e}$. By transitivity at level $\ell^{e}$ there exists $g\in G(\ell^{e})$ with $g\tilde v=\tilde w$, and reducing modulo $\ell^{k}$ gives $\pi_{e,k}(g)\in G(\ell^{k})$ with $\pi_{e,k}(g)v=w$. Thus $G(\ell^{k})$ is transitive on exact-order $\ell^{k}$ vectors. Consequently, requiring transitivity for all $\ell^{k}\mid m_0$ is equivalent to requiring it only for $\ell^{v_\ell(m_0)}\parallel m_0$.
\end{remark}

We will later state and prove the algorithm to check $\EF_{m_0}$. First, we prove an alternate formulation of the condition:
\begin{proposition}\label{equivalence conditions}
    Let $E/\QQ$ be a non-CM elliptic curve and let $m_0 \geq 1$. Fix coprime integers $a,b >1$ with $ab \mid m_0$, and let $K_n$, $L_{a,b}$ be defined as above. Let $A \coloneq G(a)$, $B \coloneq G(b)$, and let $\Gamma\subseteq A\times B$ be the image of $G(ab)$ under the CRT identification \[ \GL_2(\ZZ/ab\ZZ)\;\cong\;\GL_2(\ZZ/a\ZZ)\times \GL_2(\ZZ/b\ZZ). \]
    Define \[  N_A \coloneq \{x\in A:(x,I)\in \Gamma\},\qquad N_B \coloneq \{y\in B:(I,y)\in \Gamma\}.\] Then the following hold.
    \begin{enumerate}
    \item There are canonical isomorphisms \[
    A/N_A \;\cong\; Q_{a,b} \;\cong\; B/N_B. \]
    \item For any point $P_a\in E[a]$ of exact order $a$ and any point $P_b\in E[b]$ of exact order $b$, the field-theoretic stabilizer-surjectivity conditions
    \[
    \text{res}_a\bigl(\text{Stab}_{\Gal(K_a/\QQ)}(P_a)\bigr)=Q_{a,b},\qquad
    \text{res}_b\bigl(\text{Stab}_{\Gal(K_b/\QQ)}(P_b)\bigr)=Q_{a,b}
    \]
    are equivalent to the group-theoretic conditions
    \[
    \langle \text{Stab}_A(v),\,N_A\rangle=A,\qquad
    \langle \text{Stab}_B(w),\,N_B\rangle=B,
    \]
    where $v\in V_a$ and $w\in V_b$ correspond to $P_a,P_b$ under a choice of $\ZZ/a\ZZ$-basis of $E[a]$ and $\ZZ/b\ZZ$-basis of $E[b]$.
    \end{enumerate}
\end{proposition}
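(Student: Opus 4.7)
The plan is to translate everything into the fiber-product picture already established in the proof of Theorem~\ref{sufficient criteria}. Recall that by CRT and the equivariance of $E[ab]\cong E[a]\times E[b]$ as a $\GalQ$-module, one has $K_{ab}=K_aK_b$ and the canonical isomorphism
\[
\Gal(K_{ab}/\QQ)\cong \Gal(K_a/\QQ)\times_{Q_{a,b}}\Gal(K_b/\QQ),
\]
where the maps to $Q_{a,b}$ are the restrictions $\mathrm{res}_{L_{a,b}}^{K_a}$ and $\mathrm{res}_{L_{a,b}}^{K_b}$. Under the identifications $\Gal(K_t/\QQ)\cong G(t)$ for $t\in\{a,b,ab\}$, this reads $\Gamma=A\times_{Q_{a,b}}B$, with the two structure maps $\phi:A\twoheadrightarrow Q_{a,b}$ and $\psi:B\twoheadrightarrow Q_{a,b}$ coming from restriction. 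I will treat these identifications as fixed throughout.

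For part (1), I would argue that $N_A$ is exactly $\ker\phi$, and symmetrically $N_B=\ker\psi$. Indeed, for $x\in A$, membership $(x,I)\in\Gamma$ is equivalent (by the fiber-product description) to $\phi(x)=\psi(I)=1$, i.e.\ $x\in\ker\phi$. Since $\phi$ is surjective (the restriction map $\Gal(K_a/\QQ)\twoheadrightarrow\Gal(L_{a,b}/\QQ)$ is surjective because $L_{a,b}\subseteq K_a$ is Galois over $\QQ$), the first isomorphism theorem gives $A/N_A\cong Q_{a,b}$. The same argument handles the $B$-side. This step is essentially bookkeeping; the only subtlety is to verify surjectivity of $\phi,\psi$, which is immediate from Galois theory of a tower.

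For part (2), I would fix bases of $E[a]$ and $E[b]$ so that $P_a\leftrightarrow v\in V_a$ and $P_b\leftrightarrow w\in V_b$, and note that under the identification $\Gal(K_a/\QQ)\cong A$ the subgroup $\Stab_{\Gal(K_a/\QQ)}(P_a)$ corresponds to $\Stab_A(v)$. Applying part~(1), the image of $\Stab_A(v)$ in $A/N_A\cong Q_{a,b}$ is $\bigl(\Stab_A(v)\cdot N_A\bigr)/N_A$. Because $N_A\trianglelefteq A$, the product $\Stab_A(v)\cdot N_A$ is already a subgroup of $A$, and it coincides with $\langle \Stab_A(v),N_A\rangle$. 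Thus
\[
\mathrm{res}_a\bigl(\Stab_{\Gal(K_a/\QQ)}(P_a)\bigr)=Q_{a,b}
\quad\Longleftrightarrow\quad
\langle \Stab_A(v),N_A\rangle=A,
\]
and the $b$-side is symmetric.

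The argument is almost entirely formal, so there is no real obstacle beyond keeping the identifications straight; the one point to articulate carefully is why $\phi$ (and $\psi$) are surjective onto $Q_{a,b}$, since this is what makes $A/N_A\cong Q_{a,b}$ rather than merely $A/N_A\hookrightarrow Q_{a,b}$. Once this is in place, part~(2) is a one-line consequence of part~(1) together with the standard fact that restriction of Galois acts on torsion points through the chosen bases. I would present the proof in exactly this order: fix identifications, derive part~(1) as the kernel description, then deduce part~(2) via the correspondence theorem.
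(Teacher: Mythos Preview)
Your proposal is correct and follows essentially the same route as the paper: both arguments identify $N_A$ with the kernel of the restriction map $A\cong\Gal(K_a/\QQ)\twoheadrightarrow Q_{a,b}$, invoke surjectivity of restriction (since $L_{a,b}\subseteq K_a$ is Galois over $\QQ$) to obtain $A/N_A\cong Q_{a,b}$, and then deduce part~(2) via the correspondence theorem together with normality of $N_A$. The only cosmetic difference is that you read off $N_A=\ker\phi$ directly from the fiber-product description $\Gamma=A\times_{Q_{a,b}}B$ established in the proof of Theorem~\ref{sufficient criteria}, whereas the paper rederives this identification from scratch by chasing $\Gal(K_{ab}/K_b)$ through restriction; the content is the same.
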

\begin{proof}
    Throughout, we use the standard Galois correspondence for the finite Galois extensions $K_a/\QQ$, $K_b/\QQ,$ and $K_{ab}/\QQ$. The action of $\GalQ$ on $E[n]$ gives a surjection $\rho_{E,n}: \GalQ \twoheadrightarrow G(n) \subseteq \GL_2(\Zn),$ and by construction its kernel is $\Gal(\overline{\QQ}/K_n)$. Hence, $G(n) \cong \Gal(K_n/\QQ).$ In particular, we fix the identifications $A \coloneqq \Gal(K_a/\QQ)$ and $B \coloneq \Gal(K_b/\QQ).$ For $ab$, we likewise have $G(ab) \cong \Gal(K_{ab}/\QQ),$ and under Chinese Remainder Theorem, the restriction map $$\Gal(K_{ab}/\QQ)\rightarrow \Gal(K_a/\QQ)\times \Gal(K_b/\QQ)$$ corresponds (under the above identifications) to the map $G(ab) \to A \times B$ sending $g \mapsto (g \bmod a, g \bmod b)$. Thus, $\Gamma \subseteq A \times B$ is precisely the image of $\Gal(K_{ab}/\QQ)$ in $\Gal(K_a/\QQ)\times\Gal(K_b/\QQ).$

    Let $\varphi_a: \Gal(K_{ab}/\QQ) \to \Gal(K_a/\QQ)$ and $\varphi_b: \Gal(K_{ab}/\QQ) \to \Gal(K_{b}/\QQ)$ be restriction. By definition, $$N_A=\{\,\varphi_a(\sigma): \sigma\in \Gal(K_{ab}/\QQ),\ \varphi_b(\sigma)=1\,\}.$$ The condition $\varphi_b(\sigma)=1$ is equivalent to $\sigma$ acting trivially on $K_b$; hence such $\sigma$ lies in $\Gal(K_{ab}/K_b).$ Therefore $N_A=\varphi_{a}\bigl(\Gal(K_{ab}/K_b)\bigr.$ Now note that $\varphi_a$ maps $\Gal(K_{ab}/K_b)$ isomorphically onto $\Gal(K_a/L_{a,b})$: indeed, elements of $\Gal(K_{ab}/K_b)$ act on $K_a$ and fix $K_b$, so they fix $L_{a,b}=K_a\cap K_b$ and hence restrict to $\Gal(K_a/L_{a,b})$; conversely any automorphism of $K_a$ fixing $L_{a,b}$ extends (by acting trivially on $K_b$) to an automorphism of the compositum $K_{ab}$ fixing $K_b$. Thus, under the identification $A\cong \Gal(K_a/\QQ)$, we have \[N_A\;\cong\;\Gal(K_a/L_{a,b}).\] Similarly, \[N_B\;\cong\;\Gal(K_b/L_{a,b}).\] The restriction map $\varphi_a: \Gal(K_a/\QQ) \to \Gal(L_{a,b}/\QQ)$ is surjective because $L_{a,b} \subseteq K_a$ is Galois over $\QQ$ as it is the intersection of two Galois extensions, and its kernel is $\Gal(K_a/L_{a,b})$. Transporting this statement through $A$ and $N_A \cong \Gal(K_a/L_{a,b})$ yields a canonical isomorphism $$A/N_A \cong \Gal(L_{a,b}/\QQ)=Q_{a,b}.$$ The same argument gives $B/N_B \cong Q_{a,b}.$ This proves  $(1).$

    Fix $P_a \in E[a]$ of exact order $a$. Choose a $\ZZ/a\ZZ$-basis of $E[a]$ so that $P_a$ corresponds to $v \in V_a$. Under $A \coloneq \Gal(K_a/\QQ),$ the stabilizer subgroup \[ \text{Stab}_{\Gal(K_a/\QQ)}(P_a)=\{\sigma\in \Gal(K_a/\QQ):\sigma(P_a)=P_a\}\] corresponds exactly to \[ \text{Stab}_A(v)=\{g\in A: gv=v\}.\] Consider the surjection $A\twoheadrightarrow A/N_A\cong Q_{a,b}$ from $(1)$. Its restriction to $\text{Stab}_A(v)$ is surjective if and only if the image of $\text{Stab}_A(v)$ in $A/N_A$ equals all of $A/N_A$. Equivalently, \[\text{Stab}_A(v)\cdot N_A = A\qquad\Longleftrightarrow\qquad\langle \text{Stab}_A(v),N_A\rangle=A,\] since $N_A\triangleleft A$. Transporting back through the identifications $A/N_A\cong Q_{a,b}$ and $A\cong \Gal(K_a/\QQ)$, this is exactly \[\text{res}_a\bigl(\text{Stab}_{\Gal(K_a/\QQ)}(P_a)\bigr)=Q_{a,b}.\] The same argument applies to $b$ and proves $(2)$.
\end{proof}
For each coprime $a,b$ with $ab\mid m_0$, the field-theoretic stabilizer-surjectivity condition in $(\EF_{m_0})$ is equivalent to the group-theoretic stabilizer-generation condition expressed using $A,B,\Gamma,N_A,N_B$. It thus suffices to check, for each coprime pair $(a,b)$ where $ab \mid m_0$, whether the stabilizer-surjectivity conditions $$\langle \text{Stab}_{G(a)}(v), N_a\rangle=G(a) \quad \langle \text{Stab}_{G(b)}(w),N_b \rangle = G(b)$$ for all $v \in V_a$ and $w \in V_b$ hold. 

\medskip

\begin{algorithm}[H]
\caption{Stabilizer-Surjectivity $(\EF_{m_0})$}
\label{stabilizer-surjectivity}
\begin{algorithmic}[1]
\Statex \textbf{Input:} Generators of $G(m_0)\le \GL_2(\ZZ/m_0\ZZ)$.
\Statex \textbf{Output:} \textbf{PASS}/\textbf{FAIL}.
\State For each coprime $a,b>1$ with $ab\mid m_0$, form $(A,B,\Gamma,N_A,N_B)$.
\State \textbf{FAIL} if any pair violates (i) $[A:N_A]=[B:N_B]$, (ii) $\langle \text{Stab}_A(v),N_A\rangle=A$ for all $[v]\in V_a/A$, or (iii) $\langle \text{Stab}_B(w),N_B\rangle=B$ for all $[w]\in V_b/B$.
\State Otherwise \textbf{PASS}.
\end{algorithmic}
\end{algorithm}

\begin{lemma}\label{lem:orbit-invariance}
Let $a,b>1$ be coprime with $ab\mid m_0$. Let $\Gamma\subseteq G(a)\times G(b)$ be the CRT-image of $G(ab)$, with projections
$\pi_a,\pi_b$. Define
\[
N_A \coloneq \{x\in G(a):(x,1)\in\Gamma\}=\pi_a(\ker\pi_b)\] \[
N_B \coloneq \{y\in G(b):(1,y)\in\Gamma\}=\pi_b(\ker\pi_a).
\]
Then $N_A\triangleleft G(a)$ and $N_B\triangleleft G(b)$. Moreover, for $t\in\{a,b\}$ the condition
\[
\langle \text{Stab}_{G(t)}(v),\,N_t\rangle = G(t)
\]
is constant on $G(t)$-orbits in $V_t$; hence it suffices to test one representative per orbit.
\end{lemma}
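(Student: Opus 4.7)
The plan is to handle the two assertions separately, both reducing to standard group-theoretic facts once we exploit the identifications already established in Proposition~\ref{equivalence conditions}.

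For normality, I would give two perspectives, using whichever is cleanest in the write-up. First, Proposition~\ref{equivalence conditions}(1) identifies $N_A$ as the kernel of the canonical surjection $G(a)\twoheadrightarrow A/N_A\cong Q_{a,b}$ obtained by restricting a Galois automorphism of $K_a$ to $L_{a,b}$; kernels of homomorphisms are normal, so $N_A\triangleleft G(a)$. Alternatively, one may work directly inside $\Gamma$: the projection $\pi_a:\Gamma\twoheadrightarrow G(a)$ is surjective (since $\Gamma$ is the CRT-image of $G(ab)$, which by definition surjects onto $G(a)$ under reduction mod $a$), the subgroup $\ker\pi_b$ is normal in $\Gamma$ as the kernel of a homomorphism, and the image of a normal subgroup under a surjection is normal; hence $N_A=\pi_a(\ker\pi_b)\triangleleft G(a)$. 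The statement for $N_B$ is entirely symmetric.

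For orbit-invariance, fix $t=a$ (the case $t=b$ is identical) and suppose $v'=gv$ for some $g\in G(a)$. Point-stabilizers transform by conjugation:
\[
\text{Stab}_{G(a)}(v') \;=\; g\,\text{Stab}_{G(a)}(v)\,g^{-1}.
\]
Because $N_A\triangleleft G(a)$, conjugation by $g$ fixes $N_A$ setwise, and therefore
\[
\langle \text{Stab}_{G(a)}(v'),\,N_A\rangle \;=\; \langle g\,\text{Stab}_{G(a)}(v)\,g^{-1},\,gN_Ag^{-1}\rangle \;=\; g\,\langle \text{Stab}_{G(a)}(v),\,N_A\rangle\,g^{-1}.
\]
Since $G(a)$ is invariant under conjugation by any of its own elements, the right-hand side equals $G(a)$ if and only if $\langle \text{Stab}_{G(a)}(v),\,N_A\rangle=G(a)$; this is exactly orbit-invariance of the condition.

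I do not anticipate a serious obstacle: the content is the normality of $N_A$, and once that is secured via Proposition~\ref{equivalence conditions}(1), the orbit-invariance step is a one-line conjugation argument. The only point requiring mild care is making explicit the surjectivity of $\pi_a$ (and $\pi_b$) onto $G(a)$ (resp.\ $G(b)$), which justifies applying the ``image of a normal subgroup'' argument, but this follows directly from the definition of $\Gamma$ as the CRT-image of $G(ab)$.
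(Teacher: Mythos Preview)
Your proposal is correct and matches the paper's proof essentially verbatim: the paper uses precisely your second normality argument (surjectivity of $\pi_a$ together with $\ker\pi_b\triangleleft\Gamma$ gives $N_A=\pi_a(\ker\pi_b)\triangleleft G(a)$) and then the identical conjugation step $\langle \text{Stab}(v'),N_t\rangle=g\langle \text{Stab}(v),N_t\rangle g^{-1}$ for orbit-invariance. Your first normality route via Proposition~\ref{equivalence conditions}(1) is a harmless alternative but not needed.
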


\begin{proof}
Since $\pi_a,\pi_b$ are surjective homomorphisms, $\ker(\pi_b)\triangleleft \Gamma$ and thus
$N_A=\pi_a(\ker\pi_b)\triangleleft \pi_a(\Gamma)=G(a)$; similarly $N_B\triangleleft G(b)$.
If $v'=gv$ with $g\in G(t)$, then $\text{Stab}(v')=g\,\text{Stab}(v)\,g^{-1}$, and normality of $N_t$ gives
$$\langle \text{Stab}(v'),N_t\rangle = g\langle \text{Stab}(v),N_t\rangle g^{-1}.$$
\end{proof}

By Proposition~\ref{equivalence conditions} (2), the algorithm checks $\EF_{m_0}$ by verifying
$\langle\text{Stab}_A(v),N_A\rangle=A$ (resp. $\langle\text{Stab}_B(w),N_B\rangle=B$) on orbit representatives,
which is sufficient by Lemma~\ref{lem:orbit-invariance}.

\begin{ex}
    We take $E/\QQ$ to be the curve \href{https://www.lmfdb.org/EllipticCurve/Q/232544/f/1}{232544.f1}. Applying Algorithms \ref{alg:transitivity-lk} and $\ref{stabilizer-surjectivity}$ shows that $E$ meets both $(\LT_G)$ and $(\EF_{m_0}).$ Thus, $\lvert \PE \rvert=1.$
\end{ex}
\begin{ex}\label{counterex}
    Satisfying $(\LT_G)$ and $(\EF_{m_0})$ is not a necessary condition for uniqueness of primitive point. Let $E/\QQ$ be the non-CM elliptic curve \href{https://www.lmfdb.org/EllipticCurve/Q/1944/c/1}{1944.c1}. This curve has $N=m_0=12,$ and $\lvert \PE \rvert=1$. Applying Algorithm \ref{stabilizer-surjectivity} to $G(m_0)$ shows that the curve fails $(\EF_{m_0})$, at both pairs $(a,b) \in \{(2,3),(3,4)\}.$ We also note that for these two pairs, $K_a \cap K_B \neq \QQ$.
\end{ex}
We will later prove that if $E$ is Serre, then $\lvert \PE\rvert=1$; therefore, we will only consider $(\LT_G)$ and $(\EF_{m_0})$ in the context of non-Serre curves, which are curves with $I(E)>2$.

\begin{remark}
    Jones shows that Serre curves have density $1$ in the naive height ordering; in particular the non-Serre locus is thin. The purpose of the criteria in this section is not to optimize density statements, but to give an effective and verifiable mechanism that detects (and in favorable cases forces) uniqueness of primitive points for specific non-Serre curves, where the phenomenon is arithmetically nontrivial and computationally relevant.
\end{remark}
In practice, it is not necessary to run both algorithms. In fact, to speed up the computation, we will only run Algorithm \ref{stabilizer-surjectivity} if Algorithm \ref{alg:transitivity-lk} returns \textbf{PASS}. We give an overview of the main algorithm:
\begin{algorithm}[H]
\caption{Main algorithm}
\label{alg:main-alg}
\begin{algorithmic}[1]
\Statex \textbf{Input:} a non-CM $j$-invariant $j\in\QQ$
\Statex \textbf{Output:} \textbf{PASS} or \textbf{FAIL}
\State Choose an elliptic curve $E/\QQ$ with $j(E)=j$.
\State Compute the adelic image $G=\rho_E(\Gal(\overline{\QQ}/\QQ))$ via Zywina's algorithm \cite{zywina2024explicitopenimageselliptic}.
\State Compute $m_0$ using \cite[Algorithm~2]{Bourdon_2024}, and compute $G(m_0)$ and $G(\ell^k)$ for all $\ell^k\parallel m_0$.
\If{Algorithm~\ref{alg:transitivity-lk} applied to $G(\ell^k)$ returns \textbf{FAIL} for some $\ell^k\parallel m_0$}
    \State \Return \textbf{FAIL}
\EndIf
\If{Algorithm~\ref{stabilizer-surjectivity} applied to $G(m_0)$ returns \textbf{FAIL}}
    \State \Return \textbf{FAIL}
\EndIf
\State \Return \textbf{PASS}
\end{algorithmic}
\end{algorithm}
\thmsubsection{Complexity of Algorithm 1}
Algorithm \ref{alg:transitivity-lk} performs BFS inside $(\ZZ/\ell^k\ZZ)^2.$ The explored orbit has size at most $\lvert V_{\ell^k} \rvert = \ell^{2k}-\ell^{2k-2}.$ If there are $g$ generators of $G(\ell^k)$, then the runtime is $O(g \lvert V_{\ell^k}\rvert)=O(g\ell^{2k})$ with memory $O(\lvert V_{\ell^k}\rvert).$

\thmsubsection{Complexity of Algorithm 2}
Algorithm \ref{stabilizer-surjectivity} loops over coprime pairs $a,b>1$ with $ab \mid m_0$, forms $\Gamma \subseteq G(a) \times G(b)$ and $N_A, N_B$, and then checks $\langle \text{Stab}_A(v),N_A)=A$ on orbit representatives (and similarly for $B$). Lemma \ref{lem:orbit-invariance} reduces the number of stabilizer checks to the number of $G(a)$-orbits in $V_a$ and $G(b)$-orbits in $V_b$. In the worst case this is $O(\lvert V_a\rvert +\lvert V_b\rvert),$ so an overall coarse bound is $$O\left(\sum_{ab\mid m_0, (a,b)=1} \left( \lvert V_a\rvert +\lvert V_b\rvert \right) \cdot C_{\text{stab}}\right)$$ where $C_{stab}$ is the cost of computing stabilizers and subgroup generation inside $G(a)$ and $G(b)$. The work scales essentially like a weighted sum of $a^2$ over coprime divisor pairs, not like any computation on modular curves themselves.

\thmsubsection{Results}
We stratify curves by adelic index using the conjectured finite list in
\cite{zywina2024explicitopenimageselliptic} and ran Algorithm \ref{alg:main-alg} on all non-CM,
non-Serre elliptic curves in \cite{lmfdb} whose adelic indices lie in this list,
with the exception of index \(12\), for which we processed the first quarter of the curves, totalling up to over $800\,000$ curves. For each curve we record its \texttt{LMFDB label}, \(m_0\), \(I(E)\), \(|\PE|\),
the algorithm \texttt{output}, the \texttt{fail stage} (LT$_G$ or EF$_{m_0}$), the
\texttt{first failure witness} (for LT$_G$ and, conditional on passing LT$_G$, for
EF$_{m_0}$), and the \texttt{runtime}.

In this corpus, \(|\PE|=1\) occurs for \(1.62\%\) of curves. The certificate returns
\texttt{PASS} for \(1.49\%\) of curves and exhibits no false positives: among curves with
\(|\PE|\neq 1\), \texttt{PASS} never occurs. On the true class, recall is \(92.45\%\), i.e.
the false negative (FN) rate is \(7.55\%\).

Fix a prime power \(\ell^k\parallel m_0\). A first-fail witness \((\ell,k)\) indicates that
the mod-\(\ell^k\) image does not act transitively on \(V_{\ell^k}\); equivalently, the
Galois action on cyclic subgroups of order \(\ell^k\) splits into multiple orbits. This is a
local ``inner-structure'' defect: there is residual structure at level \(\ell^k\) obstructing
full mixing of primitive \(\ell^k\)-torsion directions. Empirically, this phenomenon is
overwhelmingly \(2\)-adic: \((2,1),(2,2),(2,3)\) account for \(88.54\%\) of all LT$_G$
failures, while \((3,1)\), \((3,2)\), and \((5,1)\) contribute on a much smaller scale. Thus, on the full corpus, negative instances are explained predominantly by
low-level prime-power non-mixing, dominated by the first few \(2\)-power layers.

EF$_{m_0}$ is evaluated only after every relevant prime power passes LT$_G$, so it probes a
different layer: incompatibility across coprime moduli \((a,b)\), detected through the CRT
image \(\Gamma\subseteq G(a)\times G(b)\) together with a stabilizer-generation test on one
side. In field-theoretic terms, nontrivial intersections \(K_a\cap K_b\) can force the image
\(G(ab)\) to be a proper fiber product inside \(G(a)\times G(b)\), constraining orbit growth
even when each prime-power component is locally mixing. This coprime-coupling phenomenon is
extremely sparse: only \(0.14\%\) of the corpus fails EF$_{m_0}$.

Nevertheless, EF$_{m_0}$ failures are rigidly supported on a small set of witness types, and
these witnesses split into distinct empirical phenotypes. Writing \(\mathrm{FN\ share}\) for
the proportion of
$\EF_{m_0}$ failures in a witness class that actually have \(|\PE|=1\), we obtain:
\[
\begin{array}{c|ccccc|c|c}
(a,b) & (2,3) & (2,9) & (3,8) & (2,3) & (7,8) & (3,4) & (2,5) \\
\mathrm{side} & A & A & A & B & A & A & A \\
\hline
\#\mathrm{EF\_FAIL} & 418 & 246 & 188 & 56 & 16 & 132 & 116 \\
\#\mathrm{FN}       & 418 & 246 & 188 & 56 & 16 & 108 & 0 \\
\mathrm{FN\ share}  & 1.00 & 1.00 & 1.00 & 1.00 & 1.00 & 0.82 & 0.00
\end{array}
\]
The dominant \(2\)--\(3\)-power witnesses \((2,3)\), \((2,9)\), \((3,8)\) on side $A$ (and \((2,3)\) on
side \(B\)) are purely conservative on this corpus: whenever EF$_{m_0}$ fails for these
types, one still has \(|\PE|=1\). In contrast, the witness \((2,5)\) is decisive on this
corpus: whenever EF$_{m_0}$ detects the \((2,5)\) coprime obstruction, one always has
\(|\PE|>1\). The witness \((3,4)\) is mixed (FN share \(=0.82\)), accounting for both false
negatives and genuine non-uniqueness. EF false negatives are strongly one-sided: \(94.57\%\)
occur on side \(A\). Moreover, $\EF_{m_0}$ witnesses occur only at small moduli forced by
\(m_0\)-divisibility (e.g. \(((2,3),A)\) only for \(m_0\in\{6,12,30\}\), \(((2,9),A)\) only
for \(m_0\in\{18,36,72\}\), \(((3,8),A)\) only for \(m_0\in\{24,72\}\), \(((3,4),A)\) only
for \(m_0\in\{12,24\}\), and \(((2,5),A)\) only for \(m_0\in\{10,20\}\)); thus the
empirically relevant coprime-coupling layer is concentrated at low levels.

False negatives occur only at
\(I(E)\in\{6,12,24,30,36,48,84,120,144\}\), and several strata exhibit rigid $\EF_{m_0}$ behavior on
the true class. For instance, at \(I(E)=6\) the true cases all fail $\EF_{m_0}$ with witness
\(((2,3),A)\) (with \(m_0=6\)), while at \(I(E) \in \{4,8,10,20,40,54\}\) the certificate has 100\%
recall on the true class. This is consistent with the witness dichotomy above: once local
prime-power mixing holds, the remaining barrier to certification is governed by a small menu
of low-level coprime constraints, and some of those constraints persist uniformly across an
index stratum even though they do not correspond to genuine non-uniqueness.

Runtime quantiles were recorded for the curves that reached the $\EF_{m_0}$ stage (i.e. \texttt{PASS}
or EF\_FAIL). On this subset, the median runtime is 0.293s (90th percentile 13.870s, 95th
percentile 57.819s). Certified cases have median 0.281s (95th percentile 30.984s), while
EF-failing cases show a heavy tail (median 0.586s, 90th percentile 226.651s, 95th percentile
508.415s).

\section{The Case of Serre Curves}
Recall the adelic Galois representation
\[
\rho_E:\Gal_{\mathbb{Q}}\rightarrow \GL_2(\widehat{\mathbb{Z}})
\]
and write $I(E)=[\GL_2(\widehat{\mathbb{Z}}):\rho_E(\Gal_{\mathbb{Q}})]$ for the index of its image. An elliptic curve $E/\mathbb{Q}$ is called a \emph{Serre curve} if $I(E)=2$, i.e.\ if its adelic Galois representation has the largest possible image among non-CM elliptic curves. The significance of Serre curves is underscored by the result established in \cite{Jones2009-ob}, which states that, when elliptic curves are ordered by height, Serre curves have density one. This means that almost all elliptic curves over $\QQ$ are Serre curves. In this section we analyze the primitive points associated to Serre curves and prove that if $E$ is a Serre curve, then
\(
|\mathcal{P}(E)|=1.
\)
\smallskip
Let $G, G(n)$ be as defined in previous sections and let $H$ be an arbitrary group. For the rest of this section, unless stated otherwise, denote $H'\coloneq[H,H],$ the closed commutator subgroup of $H.$
\begin{theorem}
    Let $E/\mathbb{Q}$ be a non-CM elliptic curve. Let $m$ and $m_0$ be as previously defined. If $m_0=1$, then $E$ is a Serre curve.
\end{theorem}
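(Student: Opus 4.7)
My plan is to derive surjectivity of every $\ell$-adic Galois representation from the hypothesis $m_0=1$, and then to combine this with Serre's obstruction to pin down the adelic image exactly as the kernel of the Serre character, giving $I(E)=2$.

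First I would unpack the hypothesis. By the definition of $m_0$ as the level of the $m$-adic image, $m_0=1$ is equivalent to $\mathrm{im}(\rho_{E,m^\infty})=\pi_1^{-1}(G(1))=\prod_{\ell\in S_E}\GL_2(\ZZ_\ell)$, so the $m$-adic representation is surjective. Since $\{2,3\}\subseteq S_E$ by construction and primes outside $S_E$ already have surjective $\ell$-adic representations by the very definition of $S_E$, this forces $\rho_{E,\ell^\infty}$ to be surjective for every prime $\ell$. In particular $S_E=\{2,3\}$, $m=6$, and the joint $6$-adic image is the full product $\GL_2(\ZZ_2)\times \GL_2(\ZZ_3)$.

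Next I would invoke Serre's obstruction: for every non-CM $E/\QQ$ there is a canonical character $\epsilon_E\colon \GL_2(\Zhat)\to\{\pm 1\}$, built from the sign character on $\GL_2(\FF_2)\cong S_3$ together with the Kronecker character of $\QQ(\sqrt{d})$ composed with $\det$ (where $d$ is the squarefree part of $\Delta_E$), satisfying $G=\rho_E(\Gal_{\QQ})\subseteq \ker(\epsilon_E)$. This already gives $I(E)\ge 2$, with equality precisely when $E$ is Serre, so it remains to show the reverse inclusion $G\supseteq \ker(\epsilon_E)$.

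For this I would run a Goursat-type analysis of cross-prime entanglements of $G$ inside $\prod_\ell \GL_2(\ZZ_\ell)$. Such entanglements are detected by common abelian quotients of the $\GL_2(\ZZ_\ell)$. Splitting off the determinant, which is globally fixed by the cyclotomic character $\det\rho_E=\chi_{\mathrm{cyc}}$ and hence contributes no independent entanglement, the remaining contribution sits in the $\SL_2$ parts. Since $\SL_2(\ZZ_\ell)$ is perfect for $\ell\ge 5$, only the primes $2$ and $3$ can participate, and the joint $6$-adic surjectivity from the first step rules out any $2$--$3$ entanglement. The only surviving mixed obstruction is the pairing of the $2$-adic sign character with a quadratic cyclotomic character, which is precisely $\epsilon_E$. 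Thus $G=\ker(\epsilon_E)$, giving $I(E)=2$ and $E$ a Serre curve.

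The main obstacle is this last Goursat step: one must carry out the bookkeeping of $\GL_2$ and $\SL_2$ abelianizations at the small primes carefully enough to verify that $\epsilon_E$ really captures the entire cross-prime obstruction, and to confirm that no exotic entanglement between $\{2,3\}$ and a prime $\ell\ge 5$ can survive once all individual $\ell$-adic representations are surjective. The hypothesis $m_0=1$ is what lets the analysis collapse to the familiar Serre obstruction, because it kills the only entanglement visible at the prime pair $(2,3)$.
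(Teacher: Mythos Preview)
Your proposal is correct but takes a genuinely different route from the paper. The paper's proof invokes Zywina's identity $[\GL_2(\Zhat):G]=[\SL_2(\Zhat):G']=[\SL_2(\ZZ_m):G(m)']$ together with the Lang--Trotter computation of commutator subgroups: once $m_0=1$ forces $G(m)=\GL_2(\ZZ_m)$, one has $G(m)'=\prod_{\ell\mid m}\GL_2(\ZZ_\ell)'$, and the index in $\SL_2(\ZZ_m)$ is $\prod_{\ell\mid m}[\SL_2(\ZZ_\ell):\GL_2(\ZZ_\ell)']=2$, the factor of $2$ coming solely from $\ell=2$. There is no explicit Goursat step and no need to name $\epsilon_E$. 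Your approach instead bounds $I(E)$ from below by the Serre obstruction and from above by a Goursat analysis of cross-prime entanglements; this is the classical line (Serre, Lang--Trotter, Jones) and is more self-contained, but the bookkeeping you flag as the ``main obstacle'' is real. One point of exposition to tighten: the phrase ``splitting off the determinant \dots contributes no independent entanglement'' is misleading, since $\epsilon_E$ itself couples the sign at $2$ with a quadratic character \emph{through} $\det$. What you actually need is that for odd $\ell$ the abelianization of $\GL_2(\ZZ_\ell)$ is $\ZZ_\ell^\times$ via $\det$ (so any cross-prime abelian constraint not involving the $2$-adic sign contradicts surjectivity of $\det\rho_E=\chi_{\mathrm{cyc}}$), that the non-abelian composition factors $\PSL_2(\FF_\ell)$ are pairwise non-isomorphic, and that $6$-adic surjectivity kills the residual $S_3$-type coupling between $2$ and $3$; then the unique surviving constraint is of Serre type and $I(E)\le 2$. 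The paper's argument buys brevity by packaging exactly this analysis into the cited commutator identities; yours buys transparency about where the index-$2$ obstruction actually lives.
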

\begin{remark}
    We note that this is the case where $E$ trivially has only 1 primitive point. If $m_0=1,$ then it has only 1 divisor, namely 1, and thus $X_1(1)$ is the only modular curve that can contain any primitive point associated with $E$, that is, the unique point above $j(E)$ on $X_1(1)$.
\end{remark}
\begin{proof}
Assume $m_0=1$. This implies that the $m$-adic image is full:
\[
\rho_{E,m^\infty}(\Gal_\QQ)=\GL_2(\ZZ_m)=\prod_{\ell\mid m}\GL_2(\ZZ_\ell).
\] Let $G(m)$ denote the image of $G$ under reduction modulo $m$. By \cite{zywina2022possibleindice} one has
\[
[\GL_2(\Zhat):G]=[\SL_2(\Zhat):G']=[\SL_2(\ZZ_m):G(m)'].
\]
Thus it suffices to show that $[\SL_2(\ZZ_m):G(m)']=2$.

Since $\rho_{E,m^\infty}(\Gal_\QQ)=\GL_2(\ZZ_m)$, we have $G(m)=\GL_2(\ZZ_m)$ and hence
\[
G(m)'=\GL_2(\ZZ_m)'=\prod_{\ell\mid m}\GL_2(\ZZ_\ell)'.
\] For odd primes $\ell$, Lang and Trotter show that $\GL_2(\ZZ_\ell)'=\SL_2(\ZZ_\ell)$, while for $\ell=2$ one has
\[
\GL_2(\ZZ_2)'=\ker(\mathrm{sgn})\cap \SL_2(\ZZ_2),
\]
where $\mathrm{sgn}:\GL_2(\ZZ_2)\to \GL_2(\FF_2)\cong S_3\to\{\pm 1\}$ is the sign character
(\cite{LangTrotter1976Frobenius}, Part II, \S3, Lemma 1 and Part III, \S2, \S4). In particular,
\[
[\SL_2(\ZZ_2):\GL_2(\ZZ_2)']=2,\qquad
[\SL_2(\ZZ_\ell):\GL_2(\ZZ_\ell)']=1\ \ (\ell \ \text{odd}).
\]
Using $\SL_2(\ZZ_m)=\prod_{\ell\mid m}\SL_2(\ZZ_\ell)$, we obtain
\[
[\SL_2(\ZZ_m):G(m)']
=\prod_{\ell\mid m}[\SL_2(\ZZ_\ell):\GL_2(\ZZ_\ell)']
=2.
\]
Therefore $[\GL_2(\Zhat):G]=2$, i.e.\ $E$ is a Serre curve.
\end{proof}

\begin{remark}
     We note that the construction of $m$ in \cite{zywina2022possibleindice} differs from the construction of $m$ in this paper by a factor of 5 if the $5$-adic Galois representation is surjective. This, however, does not meaningfully change the equality chain $[\GL_2(\Zhat):G]=[\SL_2(\Zhat):G']=[\SL_2(\ZZ_m):G(m)'],$ as we have demonstrated that $[\SL_2(\ZZ_5):\GL_2(\ZZ_5)']=1$.
\end{remark}
\begin{theorem}
Let $E/\mathbb{Q}$ be a Serre curve. Then $\lvert \mathcal{P}(E)\rvert=1$.
\end{theorem}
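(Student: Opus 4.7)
My plan is to reduce the theorem to a single transitivity check at level $m_0$ and then resolve it by exhibiting an explicit witness. By Corollary \ref{half-index cor}, $I(E)=2$ gives $r_n\le 1$ for every $n<m_0$, so $H(n)$ automatically acts transitively on $V_n$ at every level strictly below $m_0$. Theorem \ref{uniqueness criterion} then identifies $\lvert\PE\rvert=1$ with the single statement that $H(m_0)$ acts transitively on $V_{m_0}$. If $m_0=1$ the set $V_1$ is a singleton and the claim is immediate, so henceforth I assume $m_0\ge 2$.

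For the remaining transitivity question, I would exploit that $G\subseteq\GL_2(\Zhat)$ is normal of index $2$, hence the kernel of a unique nontrivial quadratic character $\chi_E\colon\GL_2(\Zhat)\to\{\pm 1\}$ factoring through $\GL_2(\ZZ/m_0\ZZ)$, with $G(m_0)=\ker\bigl(\chi_E|_{\GL_2(\ZZ/m_0\ZZ)}\bigr)$. Importing Jones's explicit description of the Serre image from \cite{Jones2009-ob},
\[
\chi_E(g)\;=\;\mathrm{sgn}(g\bmod 2)\cdot\chi_\Delta(\det g),
\]
where $\mathrm{sgn}\colon\GL_2(\FF_2)\cong S_3\to\{\pm 1\}$ is the sign character and $\chi_\Delta$ is the quadratic Dirichlet character of $\QQ(\sqrt{\Delta_E})$. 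The nontrivial sign-mod-$2$ factor forces $2\mid m_0$, which is exactly what makes a mod-$2$ witness available.

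Now $\GL_2(\ZZ/m_0\ZZ)$ acts transitively on $V_{m_0}$ with stabilizer of $v_0=(1,0)$ equal to $P_0=\bigl\{\begin{pmatrix}1 & u\\ 0 & v\end{pmatrix}\bigr\}$, and $[\GL_2(\ZZ/m_0\ZZ):H(m_0)]\le 2$, so $H(m_0)$ fails to be transitive if and only if $P_0\subseteq H(m_0)$; a single counterexample then suffices. I would take $g=\begin{pmatrix}1 & 1\\ 0 & 1\end{pmatrix}\in P_0$: the formula gives $\mathrm{sgn}(g\bmod 2)=-1$ (a transposition in $S_3$) and $\chi_\Delta(\det g)=\chi_\Delta(1)=1$, so $\chi_E(g)=-1$ and $g\notin G(m_0)$; since $\chi_E(-I)=\mathrm{sgn}(I)\cdot\chi_\Delta(1)=1$, we have $-I\in G(m_0)$, whence $H(m_0)=G(m_0)$ and $g\notin H(m_0)$. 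The main obstacle I anticipate is invoking Jones's formula cleanly enough that this single witness $g$ handles uniformly both the \emph{pure sign} case $m_0=2$ (where $\chi_\Delta$ is trivial) and the \emph{entangled} cases $m_0\in\{4,6,8,12,24\}$ (where $\chi_\Delta$ has nontrivial conductor dividing $12$), avoiding a case analysis on admissible values of $m_0$.
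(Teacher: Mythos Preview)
Your proposal is correct and constitutes a genuinely different proof from the paper's.

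\medskip

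\textbf{What you do differently.} The paper never invokes Jones's explicit description of the Serre subgroup. Instead it argues purely group-theoretically: since $[\GL_2(\Zhat):G]=2$, the commutator subgroup $C=[\GL_2(\Zhat),\GL_2(\Zhat)]$ lies in $G$, so $C(n)=[\GL_2(\ZZ/n\ZZ),\GL_2(\ZZ/n\ZZ)]\subseteq G(n)\subseteq H(n)$ for every $n$. It then proves the standalone fact that $C(n)$ acts transitively on $V_n$ for all $n$, via CRT reduction to prime powers, an induction at $p=2$, and $\SL_2\subseteq C$ at odd $p$. Your argument instead reduces everything to level $m_0$ (Theorem~\ref{uniqueness criterion}(3); your appeal to Corollary~\ref{half-index cor} is correct but actually unnecessary), imports the formula $\chi_E(g)=\mathrm{sgn}(g\bmod 2)\cdot\chi_\Delta(\det g)$, checks $-I\in\ker\chi_E$ so that $H(m_0)=G(m_0)$, and exhibits the single witness $g=\left(\begin{smallmatrix}1&1\\0&1\end{smallmatrix}\right)\in P_0\setminus G(m_0)$.

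\medskip

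\textbf{What each approach buys.} The paper's argument is self-contained and yields a reusable lemma: the commutator of $\GL_2(\ZZ/n\ZZ)$ is transitive on $V_n$, so \emph{any} index-$2$ adelic image (indeed any $G$ containing the commutator) gives $|\PE|=1$. Your argument is shorter and conceptually cleaner once Jones's formula is in hand, but it is specific to the Serre-curve shape of $\chi_E$. Two small points of care in your write-up: the claim $2\mid m_0$ deserves a one-line justification (e.g.\ $\mathrm{sgn}$ is nontrivial on $\SL_2(\ZZ_2)$ while $\chi_\Delta\circ\det$ is trivial there, so the $2$-part of $\chi_E$ cannot cancel); and $m_0=2$ does not actually occur for Serre curves (since $\chi_\Delta$ is nontrivial), though this is harmless for your witness argument.
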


\begin{proof} Since $E$ is a Serre curve, one has $[\GL_2(\widehat{\mathbb{Z}}):G]=2$. In particular, $G$ is a normal subgroup of $\GL_2(\widehat{\mathbb{Z}})$, and the quotient $\GL_2(\widehat{\mathbb{Z}})/G$ is abelian. Hence the commutator subgroup
\[
C\coloneq[\GL_2(\widehat{\mathbb{Z}}),\GL_2(\widehat{\mathbb{Z}})]
\]
is contained in $G$. Let $C(n)$ be the image of $C$ in $\GL_2(\mathbb{Z}/n\mathbb{Z})$. Since the reduction map $\GL_2(\widehat{\mathbb{Z}})\twoheadrightarrow \GL_2(\mathbb{Z}/n\mathbb{Z})$ is surjective, one has
\[
C(n)=[\GL_2(\mathbb{Z}/n\mathbb{Z}),\GL_2(\mathbb{Z}/n\mathbb{Z})].\]
Consequently,
\(
C(n)\subseteq G(n)\subseteq H(n)
\) for all $n \geq 1$. We claim that $C(n)$ acts transitively on $V_n$; it then follows that $H(n)$ acts transitively on $V_n$ as well. Write $n=\prod_i p_i^{k_i}$. By the Chinese Remainder Theorem,
\[
\GL_2(\mathbb{Z}/n\mathbb{Z})\cong \prod_i \GL_2(\mathbb{Z}/p_i^{k_i}\mathbb{Z}),
\qquad
V_n\cong \prod_i V_{p_i^{k_i}},
\]
with componentwise action. Moreover, for a direct product $H=\prod_i H_i$ one has $[H,H]=\prod_i[H_i,H_i]$. Thus it suffices to prove that
\(
[\GL_2(\mathbb{Z}/p^k\mathbb{Z}),\GL_2(\mathbb{Z}/p^k\mathbb{Z})]
\)
acts transitively on $V_{p^k}$ for each prime power $p^k$. \\

\noindent
\emph{Case 1: $p=2$.}
For $k=1$, one has $V_2=\mathbb{F}_2^2\setminus\{0\}=\{(1,0),(0,1),(1,1)\}$. Let
\[
A=\begin{pmatrix}0&1\\1&0\end{pmatrix},\qquad
B=\begin{pmatrix}1&0\\1&1\end{pmatrix}\in \GL_2(\mathbb{F}_2).
\]
A direct computation gives
\[
g\coloneq ABA^{-1}B^{-1}=\begin{pmatrix}0&1\\1&1\end{pmatrix}\in [\GL_2(\mathbb{F}_2),\GL_2(\mathbb{F}_2)],
\]
and $g$ permutes the elements of $V_2$ as a $3$-cycle. Hence $[\GL_2(\mathbb{F}_2),\GL_2(\mathbb{F}_2)]$ acts transitively on $V_2$. Now assume $k\ge 1$ and set $R_k=\mathbb{Z}/2^k\mathbb{Z}$ and $C_k=[\GL_2(R_k),\GL_2(R_k)]$. The reduction map $R_{k+1}\twoheadrightarrow R_k$ induces a surjection $C_{k+1}\twoheadrightarrow C_k$. Suppose inductively that $C_k$ acts transitively on $V_{2^k}$. Fix $v\in V_{2^{k+1}}$. Choose $h\in C_k$ sending the reduction of $v$ modulo $2^k$ to the reduction of $(1,1)$, and lift $h$ to $\widetilde h\in C_{k+1}$. Then
\(
\widetilde h\,v \equiv (1,1)\pmod{2^k},
\)
so $\widetilde h\,v=(1,1)+2^k(\alpha,\beta)$ for some $\alpha,\beta\in\{0,1\}$.

Let
\[
U(t)\coloneq\begin{pmatrix}1&t\\0&1\end{pmatrix},\qquad
L(t)\coloneq\begin{pmatrix}1&0\\t&1\end{pmatrix}.
\]
Define $D_1=\mathrm{diag}(1+2^k,1)$ and $D_2=\mathrm{diag}(1,1+2^k)$ in $\GL_2(R_{k+1})$. A straightforward commutator computation yields
\[
[D_1,U(1)]=U(2^k)\in C_{k+1},\qquad [D_2,L(1)]=L(2^k)\in C_{k+1}.
\]
Evaluating on $(1,1)+2^k(\alpha,\beta)$, the element $U(2^k)^\alpha L(2^k)^\beta\in C_{k+1}$ adjusts the two coordinates independently by $2^k$ modulo $2^{k+1}$, and hence sends $(1,1)+2^k(\alpha,\beta)$ to $(1,1)$. Therefore some element of $C_{k+1}$ sends $v$ to $(1,1)$, proving transitivity of $C_{k+1}$ on $V_{2^{k+1}}$. This completes the induction. \\

\noindent
\emph{Case 2: $p$ odd.}
Let $R=\mathbb{Z}/p^k\mathbb{Z}$. Since $2\in R^\times$, one has for all $t\in R$ the identities
\[
[\mathrm{diag}(2,1),U(t)]=U(t),\qquad [\mathrm{diag}(1,2),L(t)]=L(t),
\]
so $U(t),L(t)\in[\GL_2(R),\GL_2(R)]$. Over the local ring $R$, the elementary unipotents $U(t)$ and $L(t)$ generate $\mathrm{SL}_2(R)$; hence
\(
\mathrm{SL}_2(R)\subseteq [\GL_2(R),\GL_2(R)].
\)

By Remark \ref{transitivity of SL_2}, we know $\mathrm{SL}_2(R)$ acts transitively on $V_{p^k}$, hence so does the larger group $C(p^k)=[\GL_2(R),\GL_2(R)]$. This shows that $C(n)$ acts transitively on $V_n$ for every $n$, and therefore so does $H(n)$. By Theorem \ref{uniqueness criterion}, we conclude that $\lvert \mathcal{P}(E)\rvert=1$.
\end{proof}
Since almost all elliptic curves are Serre by \cite{Jones2009-ob}, we have:
\begin{corollary}
    Almost all non-CM elliptic curves over $\QQ$ have exactly 1 associated primitive point.
\end{corollary}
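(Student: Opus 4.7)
The plan is to deduce this corollary directly from the combination of two inputs already available in the text: the preceding theorem, which shows that every Serre curve $E/\QQ$ satisfies $|\PE|=1$, and Jones's density result in \cite{Jones2009-ob}, which asserts that, when elliptic curves over $\QQ$ are ordered by naive height, the subset of Serre curves has natural density one. Since ``almost all'' in the corollary is interpreted in this naive-height ordering, the logical structure of the argument is an inclusion of sets followed by a density comparison, so there is essentially no obstacle to overcome beyond citing the two results and chaining them.

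Concretely, I would first fix the ordering: let $\mathcal{E}(X)$ denote the set of $\QQ$-isomorphism classes of non-CM elliptic curves of naive height at most $X$, let $\mathcal{S}(X)\subseteq \mathcal{E}(X)$ denote the subset of Serre curves, and let $\mathcal{U}(X)\subseteq \mathcal{E}(X)$ denote the subset with $|\PE|=1$. The preceding theorem gives the containment $\mathcal{S}(X)\subseteq \mathcal{U}(X)$ for every $X$. Jones's theorem \cite{Jones2009-ob} gives $|\mathcal{S}(X)|/|\mathcal{E}(X)|\to 1$ as $X\to\infty$. Sandwiching
\[
\frac{|\mathcal{S}(X)|}{|\mathcal{E}(X)|}\ \le\ \frac{|\mathcal{U}(X)|}{|\mathcal{E}(X)|}\ \le\ 1
\]
and letting $X\to\infty$ yields $|\mathcal{U}(X)|/|\mathcal{E}(X)|\to 1$, which is the claim. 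The only step that requires any care is confirming that Jones's density statement is formulated in the same height ordering used to state the corollary; this is a matter of citation rather than mathematics, and no further argument is needed.
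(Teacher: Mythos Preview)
Your proposal is correct and follows exactly the same approach as the paper: the paper's proof consists of the single sentence ``Since almost all elliptic curves are Serre by \cite{Jones2009-ob}, we have:'' preceding the corollary, and your argument simply spells out this inference in more detail.
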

We call $j(E)\in X_1(1)\cong \mathbb{P}^1$ an \emph{isolated $j$-invariant} if there exists an isolated point $x\in X_1(n)$ for some $n$ such that $j(x)=j(E)$.

\begin{corollary}
Let $E/\mathbb{Q}$ be a Serre curve. Then $j(E)$ is not isolated.
\end{corollary}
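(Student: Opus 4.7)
The plan is to combine the preceding theorem $|\mathcal{P}(E)|=1$ for Serre curves with the certificate-set characterization of primitive points invoked repeatedly in the introduction: $j(E)$ arises from an isolated point on some $X_1(N)$ if and only if at least one element of $\mathcal{P}(E)$ is isolated. Thus, once we know that $|\mathcal{P}(E)|=1$, it suffices to exhibit the unique primitive point explicitly and observe that it is not isolated.

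First, I would identify the lone element of $\mathcal{P}(E)$. Since $X_1(1)$ has no nontrivial natural map to a lower level, the degree-$1$ closed point $x_0 \in X_1(1)$ above $j(E)$ is vacuously primitive, as noted right after the definition of $\mathcal{P}(E)$. Combined with $|\mathcal{P}(E)|=1$, this forces $\mathcal{P}(E) = \{x_0\}$; this is also consistent with the uniqueness criterion (Theorem \ref{uniqueness criterion}), which pins down the unique primitive point as the point on $X_1(1)$ above $j(E)$.

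Next, I would verify that $x_0$ is not isolated. Because $X_1(1) \cong \mathbb{P}^1_{\mathbb{Q}}$ has genus $0$, its Jacobian is trivial and the degree-$1$ Abel--Jacobi map $\phi_1 : \mathrm{Sym}^1(X_1(1)) \to \mathrm{Pic}^1(X_1(1))$ is constant. In particular, for any other $\mathbb{Q}$-rational point $x' \in X_1(1)$ (and there are infinitely many), we have $\phi_1(x_0) = \phi_1(x')$ with $x_0 \neq x'$, so $x_0$ is $\mathbb{P}^1$-parameterized and therefore not isolated.

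Finally, applying the certificate characterization: since the single element of $\mathcal{P}(E)$ fails to be isolated, no closed point lying above $j(E)$ on any $X_1(n)$ is isolated, so $j(E)$ is not an isolated $j$-invariant. There is no real obstacle here — the work has all been done in the preceding theorem, and this corollary is essentially a bookkeeping step translating $|\mathcal{P}(E)|=1$ into the language of isolated $j$-invariants via the genus-$0$ triviality of $X_1(1)$.
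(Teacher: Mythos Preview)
Your proposal is correct and follows essentially the same approach as the paper: identify the unique primitive point as the degree-one point on $X_1(1)\cong\mathbb{P}^1$, observe that the trivial Jacobian forces the Abel--Jacobi map to be constant so this point is $\mathbb{P}^1$-parameterized and hence not isolated, and conclude via the certificate characterization that $j(E)$ is not isolated. The only cosmetic difference is that you invoke the certificate-set equivalence explicitly, whereas the paper leaves this step implicit.
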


\begin{proof}
By the proposition, the only primitive point associated to $E$ is the degree-one point $x_1\in X_1(1)$ with $j(x_1)=j(E)$. Since $X_1(1)\cong \mathbb{P}^1$ has trivial Jacobian, the Abel--Jacobi map
\[
\Phi:X_1(1)\rightarrow \text{Jac}(X_1(1))=0
\]
is constant. In particular, for any $x'\in X_1(1)(\mathbb{Q})$ distinct from $x_1$, one has $\Phi(x_1)=\Phi(x')$. Hence $x_1$ is $\mathbb{P}^1$-parameterized and therefore not isolated. Consequently, $j(E)$ is not an isolated $j$-invariant.
\end{proof}
\begin{remark}
In particular, no isolated points arise from Serre curves. This strengthens the conclusion of \cite{BOURDON2019106824}, Corollary~1.2. We also note that a more general case of this remark has been worked out as an example in \cite{terao2025isolatedpointsmodularcurves}.
\end{remark}

\printbibliography
\end{document}